\newcommand{\Hmm}[1]{\leavevmode{\marginpar{\tiny%
$\hbox to 0mm{\hspace*{-0.5mm}$\leftarrow$\hss}%
\vcenter{\vrule depth 0.1mm height 0.1mm width \the\marginparwidth}%
\hbox to 0mm{\hss$\rightarrow$\hspace*{-0.5mm}}$\\\relax\raggedright #1}}}
\newcommand{\nc}{\newcommand}
\nc{\les}{\lesssim}
\nc{\ges}{\gtrsim}
\nc{\nit}{\noindent}
\nc{\nn}{\nonumber}
\nc{\D}{\partial}
\nc{\diff}[2]{\frac{d #1}{d #2}}
\nc{\diffn}[3]{\frac{d^{#3} #1}{d {#2}^{#3}}}
\nc{\pdiff}[2]{\frac{\partial #1}{\partial #2}}
\nc{\pdiffn}[3]{\frac{\partial^{#3} #1}{\partial{#2}^{#3}}}
\nc{\abs}[1] {\lvert #1 \rvert}
\nc{\cAc}{{\cal A}_c}
\nc{\cE}{{\cal E}}
\nc{\cF}{{\mathcal F}}
\nc{\cP}{{\cal P}}
\nc{\cV}{{\cal V}}
\nc{\cQ}{{\cal Q}}
\nc{\cGin}{{\cal G}_{\rm in}}
\nc{\cGout}{{\cal G}_{\rm out}}
\nc{\cO}{{\cal O}}
\nc{\Lav}{{\cal L}_{\rm av}}
\nc{\cL}{{\cal L}}
\nc{\cB}{{\cal B}}
\nc{\cZ}{{\cal Z}}
\nc{\cR}{{\cal R}}
\nc{\cT}{{\cal T}}
\nc{\cY}{{\cal Y}}
\nc{\cX}{{\cal X}}
\nc{\cXT}{{{\cal X}(T)}}
\nc{\cBT}{{{\cal B}(T)}}
\nc{\vD}{{\vec \mathcal{D}}}
\nc{\efield}{\mathcal{E}}
\nc{\vE}{{\vec \efield}}
\nc{\vB}{{\vec \mathcal{B}}}
\nc{\vH}{{\vec \mathcal{H}}}
\nc{\ty}{{\tilde y}}
\nc{\tu}{{\tilde u}}
\nc{\tV}{{\tilde V}}
\nc{\Pc}{{\bf P_c}}
\nc{\bx}{{\bf x}}
\nc{\bX}{{\bf X}}
\nc{\bXYZ}{{\bf XYZ}}
\nc{\bY}{{\bf Y}}
\nc{\bF}{{\bf F}}
\nc{\bS}{{\bf S}}
\nc{\dV}{{\delta V}}
\nc{\dE}{{\delta E}}
\nc{\TT}{{\Theta}}
\nc{\dPsi}{{\delta\Psi}}
\nc{\order}{{\cal O}}
\nc{\Rout}{R_{\rm out}}
\nc{\eplus}{e_+}
\nc{\eminus}{e_-}
\nc{\epm}{e_\pm}
\nc{\eps}{\varepsilon}
\nc{\vnabla}{{\vec\nabla}}
\nc{\G}{\Gamma}
\nc{\w}{\omega}
\nc{\mh}{h}
\nc{\mg}{g}
\nc{\vphi}{\varphi}
\nc{\tlambda}{\tilde\lambda}
\nc{\be}{\begin{equation}}
\nc{\ee}{\end{equation}}
\nc{\ba}{\begin{eqnarray}}
\nc{\ea}{\end{eqnarray}}
\nc{\g}{\gamma}
\nc{\ol}{\overline}
\newtheorem{theorem}{Theorem}[section]
\newtheorem{lemma}[theorem]{Lemma}
\newtheorem{prop}[theorem]{Proposition}
\nc{\T}{\mathbb T}
\nc{\Z}{\mathbb Z}
\nc{\N}{\mathbb N}
\nc{\pt}{\partial_t}
\nc{\la}{\langle}
\nc{\ra}{\rangle}
\nc{\infint}{\int_{-\infty}^{\infty}}
\nc{\halfwidth}{6.5cm}
\nc{\figwidth}{10cm}
\nc{\nlayers}{L} \nc{\nsectors}{M}
\nc{\indicator}{\mathbf{1}}
\nc{\Rhole}{R_{\rm hole}}
\nc{\Rring}{R_{\rm ring}}
\nc{\neff}{n_{\rm eff}}
\nc{\Frem}{F_{\rm rem}}
\nc{\DD}{\Delta}
\nc{\cD}{\mathcal D}
\nc{\lnorm}{\left\|}
\nc{\rnorm}{\right\|}
\nc{\rnormp}{\right\|_{\ell^{p,\eps}}}
\nc{\rar}{\rightarrow}
\nc{\sgn}{{\rm sign}}
\nc{\non}{\nonumber}
\nc{\wh}{\widehat}
\date{\today}
\begin{document}

\title[Schr\"odinger equation on irrational tori]{Local Well-posedness for 2-D Schr\"odinger Equation on Irrational Tori and Bounds on Sobolev Norms}

\author{Seckin Demirbas}

\address{Department of Mathematics \\
University of Illinois \\
Urbana, IL 61801, U.S.A.}

\email{demirba2@illinois.edu }

\begin{abstract}

 In this paper we consider the cubic Schr\"odinger equation in two space dimensions on irrational tori. Our main result is an improvement of the Strichartz estimates on irrational tori. Using this estimate we obtain a local well-posedness result in $H^{s}$ for $s>\frac{131}{416}$. We also obtain improved growth bounds for higher order Sobolev norms.
\end{abstract}

\maketitle

\section{Introduction}

The equation we consider in this paper is the cubic, Schr\"{o}dinger equation on irrational tori, namely,

\begin{equation}\label{eqn:sch}
\left\{
\begin{array}{l}
iu_{t}+\Delta u =\alpha|u|^2u, \,\,\,\,  x \in {\mathbb T_\theta^2}, \,\,\,\,  t\in [-T,T],\\
u(x,0)=u_0(x)\in H^{s}(\mathbb T_\theta^2), \\
\end{array}
\right.
\end{equation}
where $\alpha=\pm 1$. $T$ is the time of existence of the solutions and $\mathbb T_\theta^2$ is the irrational tori, $\mathbb R^2/{\theta_1 \mathbb Z \times \theta_2 \mathbb Z}$ for $\theta_1, \theta_2 > 0$ and $\theta_1/\theta_2$ irrational. The equation is called focusing for $\alpha=-1$ and defocusing for $\alpha=1$.

 The equation \eqref{eqn:sch} posed $\mathbb{T}^2$, has been studied widely for its importance in the theory of differential equations. The equation is energy subcritical, see \cite{[tao]}, and for defocusing equation, for any initial data in $H^1$ there is global well-posedness and global bounds on the Sobolev norm of the solution, see \cite{[bou-gafa]}. In addition there have been many results on the well-posedness of \eqref{eqn:sch} for both focusing and defocusing case for rough initial data (in $H^s$ for $s<1$) on two dimensional torus, see \cite{[bou]},\cite{[bgt]},\cite{[cat-wang]}, \cite{[nikos]},\cite{[guo]} and also on more general compact manifolds, see \cite{[bgt-comp]},\cite{[zho]}.
 
One of the main tools in proving local well-posedness is the Strichartz estimates, i.e. estimates of the form 
\begin{equation}\label{str}
\|e^{it\Delta}f\|_{L_t^4L_x^4(\mathbb T\times\mathbb T_\theta^2)}\les \|f\|_{H^{\frac{s_0}{2}}(\mathbb T_\theta^2)},
\end{equation}
for some $s_0\geq 0$ and $f\in H^{\frac{s_0}{2}}(\mathbb T_\theta^2)$. Our main focus in this paper will be on the improvement of this estimate on irrational tori.  
 As one can see for $\theta_1=\theta_2=1$ we get the usual (flat) torus. Although the domain resembles the flat torus, the tools used to prove \eqref{str} are fundamentally different. The reason behind this difference is that the symbol of the Laplacian on flat torus at any $(m,n)$-level is $m^2+n^2$ whereas the symbol of it on a irrational torus is of the form $(\theta_1m)^2+(\theta_2n)^2$. Thus the method of counting lattice points on a circle to get \eqref{str} cannot be applied here. In 3-d, Bourgain \cite{[bou]}, used bounds on the $l^p$-norms on the number of lattice points on the ellipsoid and Jarnick's estimate \cite{[jar]} to get \eqref{str} with $s_0=\frac{1}{3}$. A slight modification of his method in 2-d gives us a $\frac{1}{4}$-derivative loss in \eqref{str}. But this result was already proven for not only on irrational tori but also on any two dimensional compact manifold, see \cite{[bgt-comp]}. This remedy was overcome in Catoire and Wang's paper \cite{[cat-wang]} using Jarnick's estimate, see \cite{[jar]}, in two dimensions without passing to the $l^p$-norms of the number of lattice points on ellipsoids. They obtained \eqref{str} with $\frac{s_0}{2}=\frac{1}{6}$. The first part of this paper will be consisted of our main result, improving \eqref{str} to $\frac{s_0}{2}=\frac{131}{832}$ using a counting argument of Huxley, \cite{[hux]}. In the second part of the paper, using the theory of Bourgain spaces, we prove local well-posedness for initial data in $H^{s}$, $s>s_0$ and also polynomial bounds on the growth of the Sobolev norms of the solution for the defocusing case. On 2-d flat tori, we should note that the local well-posedness theory gives the exponential bound $\|u(t)\|_{H^s} \les C^{|t|}$, see \cite{[bou-growth]}. Also in \cite{[bou-growth]}, Bourgain improved this exponential bound with the polynomial bound $\|u(t)\|_{H^s} \les C\langle t \rangle^{2(s-1)+}$ using the following polynomial estimate:
\begin{lemma}\label{poly-est}
If there exists a constant $r \in (0,1)$ and $\delta > 0$ such that for any time $t_0$ we have, $$\|u(t_0 + \delta)\|
_{H^s}^2\leq \|u(t_0)\|_{H^s}^2+C\|u(t_0)\|_{H^s}^{2-r},$$ then we get $$\|u(t)\|_{H^s} \les (1 + |t|)^{\frac{1}{r}}.$$
\end{lemma}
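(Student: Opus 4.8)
This is an iteration/bootstrap argument. The hypothesis is a discrete differential inequality over fixed time steps of length $\delta$, and the conclusion is a polynomial-in-time bound. I would begin by defining the sequence $a_k = \|u(k\delta)\|_{H^s}^2$ for $k \in \mathbb{N}$, so that the hypothesis reads $a_{k+1} \leq a_k + C a_k^{1-r/2}$. The key step is to show that this recursion forces $a_k \lesssim k^{2/r}$; once this is in hand, for general $t$ one writes $t = k\delta + \tau$ with $0 \le \tau < \delta$, applies the hypothesis once more (or a trivial continuity/local-theory bound on the short interval $[k\delta, k\delta+\tau]$) to pass from $a_k$ to $\|u(t)\|_{H^s}^2$, and concludes $\|u(t)\|_{H^s} \lesssim (k\delta)^{1/r} \lesssim (1+|t|)^{1/r}$, absorbing $\delta$ and constants into the implicit constant.

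To prove $a_k \lesssim k^{2/r}$ I would proceed by induction, guessing the ansatz $a_k \le (A + Bk)^{2/r}$ for suitable constants $A, B$ depending on $a_0$, $C$, and $r$. Assuming it at step $k$, the hypothesis gives
\begin{equation}
a_{k+1} \le (A+Bk)^{2/r} + C (A+Bk)^{(2/r)(1-r/2)} = (A+Bk)^{2/r} + C(A+Bk)^{2/r - 1}.
\end{equation}
On the other hand, by the mean value theorem (or convexity of $x \mapsto x^{2/r}$, valid since $2/r > 2 > 1$),
\begin{equation}
(A + B(k+1))^{2/r} - (A+Bk)^{2/r} \ge \frac{2B}{r}(A+Bk)^{2/r - 1},
\end{equation}
so choosing $B \ge \tfrac{Cr}{2}$ makes the increment of the ansatz dominate the increment allowed by the hypothesis, closing the induction. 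The base case just requires $A^{2/r} \ge a_0$, i.e. $A \ge a_0^{r/2}$. Thus $a_k \le (a_0^{r/2} + \tfrac{Cr}{2} k)^{2/r} \lesssim \langle k \rangle^{2/r}$, which is what we needed.

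The only genuinely delicate point — and the one I would state carefully — is handling the fractional part of the time: the hypothesis is only assumed at the discrete times $t_0 + \delta$ starting from an arbitrary $t_0$, so applying it with $t_0 = k\delta$ already covers the step to $(k+1)\delta$, and applying it with a general $t_0$ and then noting the exponent $1/r$ is time-translation stable lets us slide the whole argument so that it bounds $\|u(t)\|_{H^s}$ for every real $t \geq 0$, not just integer multiples of $\delta$; negative times follow by time-reversal symmetry of the equation. Everything else is routine: the real content is the convexity estimate showing that a polynomial of degree $2/r$ grows at least as fast as the recursion permits, and the observation that $r < 1$ makes $2/r > 2$, so the exponent in the increment bound is strictly positive and the comparison is clean.
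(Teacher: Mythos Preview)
Your proposal is correct and follows the same approach the paper indicates: the paper simply remarks that it suffices to treat $t$ an integer multiple of $\delta$ and that the rest follows by induction, without writing out the details. Your ansatz $a_k\le (A+Bk)^{2/r}$ together with the convexity/mean-value inequality is exactly the standard way to carry out that induction, so you have supplied the computation the paper omits.
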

It suffices to prove this result for $t$ being an integer multiple of $\delta$ and the rest follows from induction. This result was later improved by Staffilani, see \cite{[staf]} to $\|u(t)\|_{H^s} \les C\langle t \rangle^{(s-1)+}$. On 2-d irrational tori, using Zhong's arguments \cite{[zho]} and the lemma above, Catoire and Wang proved the norm bound $\|u(t)\|_{H^s} \les C\langle t \rangle^{\frac{(s-1)}{1-\frac{2}{3}}+}$, see \cite{[cat-wang]}. In this paper we are going to improve the polynomial bound on 2-d irrational tori to the exponent $\frac{(s-1)}{(1-131/416)}+$.
\section{Notations}

  Throughout the paper, $L^2_tL^2_x$ will denote the space $L^2_tL^2_x(\mathbb T\times \mathbb T_\theta^2)$ and $L^2_tL^2_x([0,T])$ for $L^2_tL^2_x([0,T]\times \mathbb T_\theta^2)$, which will be used for Sobolev spaces too. 
  
  We will use $(.)^+$ to denote $(.)^\epsilon$ for all $\epsilon>0$ with implicit constants depending on $\epsilon$ and will use the usual Japanese bracket notation, $\langle x\rangle=(1+x^2)^{1/2}$.
  
  The linear propogator of the Schr\"odinger equation will be denoted as $e^{it\Delta}$, where it is defined on the Fourier side as $\widehat{(e^{it\Delta}f)}(m_1,m_2)=e^{-itQ(m_1,m_2)}\hat{f}(m_1,m_2)$, where $Q(m_1,m_2)=(\theta_1m_1)^2+(\theta_2m_2)^2$.
  
  The Bourgain spaces $X^{s,b}$ will be defined as the closure of compactly supported smooth functions under the norm $$\|u\|_{X^{s,b}}\dot{=}\|e^{-it\Delta}u\|_{H^b_t(\mathbb{R})H^s_x(\mathbb{T}_{\theta}^2)}=\|\langle \tau-Q(m,n) \rangle^{b} \langle |m|+|n| \rangle^s\widehat{u}(m,n,\tau)\|_{L_{\tau}^2 l^2_{(m,n)}},$$ and the restricted norm will be	 given as $$\|u\|_{X_T^{s,b}}\dot{=}\inf(\|v\|_{X^{s,b}},\ for\ v=u\ on\ [0,T]).$$ 
    
  For any operator $D$ and positive number $N$, $\mbox{\large$\chi$}$ being the characteristic function, $\mbox{\large$\chi$}^D_N u$ is defined to be $\mbox{\large$\chi$}_{D\in [N,2N]} u$.
  
  Also, in this paper we will use $s_0=\frac{131}{416}$.
  
\section{Preliminaries}

 When we say the equation \eqref{eqn:sch} is locally well-posed in $H^s$, we mean that there exist a time $T_{LWP}=T_{LWP}(\|u_0\|_{H^s})$ such that the solution exists and is unique in $X^{s,b}_{T_{LWP}}\subset C([0,T_{LWP}),H^s)$ and depends continuously on the initial data. We say that the the equation is globally well-posed when $T_{LWP}$ can be taken arbitrarily large.

 By Duhamel Principle, we know that the smooth solutions of \eqref{eqn:sch} satisfy the integral equation
 $$u(t,x)=e^{it\Delta}u_0(x)-i\alpha\int_0^t e^{i(t-\tau)\Delta}|u|^2u(\tau,x)d\tau.$$
 Thus, our main concern is to find the fixed point to the integral operator
  \begin{equation}
S(u)(t,x)=e^{it\Delta}u_0(x)-i\alpha\int_0^t e^{i(t-\tau)\Delta}|u|^2u(\tau,x)d\tau.\non
\end{equation}
To do that, we will use Banach Fixed Point Theorem on the set of functions
$$B_T=\{ u\in X^{s,b}: u(0,x)=u_0(x)\quad and\quad \|u\|_{X_T^{s,b}}\leq 2\|u_0\|_{H^s} \},$$
and get a contraction for sufficiently small $T$. 

We also note that the equation has mass and energy conservations, namely,
   $$M(u)(t)=\int_{\mathbb{T}_{\theta}^2}|u(t,x)|^2=M(u)(0),$$
and, $$E(u)(t)=\int_{\mathbb{T}_{\theta}^2}|\nabla u(t,x)|^2+\frac{\alpha}{2}\int_{\mathbb{T}_{\theta}^2}|u(t,x)|^4=E(u)(0).$$

  Thus, for the defocusing equation, i.e. $\alpha=1$, we have global bounds on the $H^1$-norm of the solution. This also says, for defocusing equation we have $H^1$ global well-posedness.

\section{Main Results}

 \begin{theorem}\label{local}

 The 2-d cubic Schr\"odinger equation \eqref{eqn:sch}
 is locally well-posed for initial data $u_0 \in H_x^s$ for $s>s_0$.

 \end{theorem}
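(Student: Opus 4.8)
The plan is to run a standard Banach fixed-point argument in the restricted Bourgain space $X_T^{s,b}$ for $b$ slightly bigger than $\frac12$, so the crux is a trilinear estimate of the form
\begin{equation}
\left\| \int_0^t e^{i(t-\tau)\Delta}\, u_1\bar{u}_2 u_3\,d\tau \right\|_{X_T^{s,b}} \les T^{\eta}\,\prod_{j=1}^{3}\|u_j\|_{X_T^{s,b}}
\end{equation}
for some $\eta>0$ and $s>s_0$. First I would record the standard linear and energy estimates in Bourgain spaces: $\|e^{it\Delta}u_0\|_{X_T^{s,b}}\les \|u_0\|_{H^s}$, the Duhamel bound $\|\int_0^t e^{i(t-\tau)\Delta}F\,d\tau\|_{X_T^{s,b}}\les T^{1-b-b'}\|F\|_{X_T^{s,-b'}}$ for suitable $b,b'$, and the embedding $X_T^{s,b}\hookrightarrow C([0,T],H^s)$ for $b>\frac12$; these reduce everything to proving $\| u_1\bar u_2 u_3\|_{X_T^{s,-b'}}\les \prod\|u_j\|_{X_T^{s,b}}$. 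By duality and the Plancherel identity this in turn follows from an $L^4_{t,x}$-type estimate for functions with frequency localization, which is exactly where the improved Strichartz estimate \eqref{str} with $\frac{s_0}{2}=\frac{131}{832}$ enters.

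The key step is to convert \eqref{str} into its $X^{s,b}$ form. Using the standard transference principle (write $u=\int e^{it\Delta}e^{it\tau}\widehat{u}(\cdot,\tau+Q(\cdot))\,d\tau$ and apply Minkowski plus \eqref{str}), one upgrades \eqref{str} to
\begin{equation}
\|u\|_{L^4_tL^4_x} \les \|u\|_{X^{\frac{s_0}{2}+,\frac12+}},
\end{equation}
and more generally, for a function with spatial frequencies of size $\sim N$, $\|P_N u\|_{L^4_tL^4_x}\les N^{\frac{s_0}{2}+}\|P_N u\|_{X^{0,\frac12+}}$. Then I would estimate $\|u_1\bar u_2 u_3\|_{X^{s,-b'}}$ by placing it against a test function $v\in X^{-s,b'}$, applying Hölder in $t,x$ to split $\int u_1\bar u_2 u_3 \bar v$ into $L^4$ norms (using $\|v\|_{L^2_tL^2_x}$ for the worst factor or a four-fold $L^4$ split depending on the frequency configuration), dyadically decomposing each $u_j$ and $v$ in frequency, and summing. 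The frequency of the output is bounded by the sum of the three input frequencies, so at most one of $u_1,u_2,u_3$ carries the top frequency; one then distributes the $s$ derivatives onto that top factor and pays $N_{\mathrm{top}}^{\frac{s_0}{2}}$ on the other two (and on $v$) from the Strichartz loss, leaving a net gain provided $s>s_0=2\cdot\frac{s_0}{2}$ — this is precisely the source of the threshold. Summation over the dyadic scales converges because of the off-diagonal decay coming from the $\langle \tau - Q\rangle$ weights together with the strict inequality $s>s_0$, which also produces the positive power $T^\eta$ needed for the contraction.

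With the trilinear estimate in hand, the fixed-point argument is routine: on the ball $B_T$ (with $2\|u_0\|_{H^s}$ in place of the generic radius), $S$ maps $B_T$ into itself and is a contraction once $T=T_{LWP}(\|u_0\|_{H^s})$ is chosen small enough, which gives existence and uniqueness in $X_T^{s,b}\subset C([0,T_{LWP}),H^s)$; continuous dependence on the data follows from the same multilinear bounds applied to differences. The main obstacle I anticipate is the $X^{s,b}$ transference and the dyadic bookkeeping: one must be careful that the Strichartz estimate \eqref{str} is genuinely a loss of $\frac{s_0}{2}$ derivatives per factor (not per product), keep the $b,b'$ exponents consistent so that $1-b-b'>0$ gives the time gain, and verify that the sums over the three dyadic frequency scales and the modulation scales actually converge — this is where the strict inequality $s>s_0$ rather than $s\geq s_0$ is used, and where any logarithmic divergences must be absorbed into the $(\cdot)^+$ notation. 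A secondary technical point is that on the irrational torus there is no scaling symmetry, so one cannot rescale to reduce to data of size one; instead the dependence $T_{LWP}=T_{LWP}(\|u_0\|_{H^s})$ must be tracked directly through the power of $\|u_0\|_{H^s}$ appearing on the right-hand side of the contraction estimate.
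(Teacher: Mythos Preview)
Your overall architecture (Duhamel in $X_T^{s,b}$, duality, transference of \eqref{str} to $X^{s_0/2+,1/2+}$, dyadic decomposition, contraction on $B_T$) matches the paper, but the heart of the argument --- the frequency bookkeeping in the quadrilinear integral --- has a real gap. A four-fold $L^4$ split based on the \emph{linear} Strichartz estimate does not close at the threshold $s>s_0$: if $u_j$ is localized at frequency $N_j$ then $\|u_j\|_{L^4}\les N_j^{s_0/2+}\|u_j\|_{X^{0,b}}$, so for $N_1\le N_2\le N_3\sim N_4$ the bound becomes $(N_1N_2)^{s_0/2}N_3^{s_0}\prod\|u_j\|_{X^{0,b}}$, whereas you need $(N_1N_2)^s\prod\|u_j\|_{X^{0,b}}$. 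In the high--low--low regime $N_1,N_2\ll N_3$ the factor $N_3^{s_0}$ has nothing to absorb it and the sum diverges. Your sentence ``pays $N_{\mathrm{top}}^{s_0/2}$ on the other two (and on $v$)'' is where the accounting goes wrong: each factor pays its \emph{own} $N_j^{s_0/2}$, and the combined loss on the top pair $u_3,v$ is $N_3^{s_0}$, not a single $N_{\mathrm{top}}^{s_0/2}$.

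What the paper actually uses is a \emph{bilinear} Strichartz estimate, obtained from the linear one by an almost-orthogonality decomposition of the high-frequency factor into cubes of the low-frequency size: $\|u_1u_2\|_{L^2_tL^2_x}\les N_{\min}^{\,s_0+}\|u_1\|_{X^{0,b}}\|u_2\|_{X^{0,b}}$. Pairing low with high, $\|u_1u_3\|_{L^2}\|u_2u_4\|_{L^2}\les (N_1N_2)^{s_0+}\prod\|u_j\|_{X^{0,b}}$, and now $s>s_0$ suffices. There is a second point you flagged but did not resolve: the dual variable $v$ must sit in $X^{-s,b'}$ with $b'<\tfrac12$ (so that $1-b-b'>0$ in the Duhamel bound), whereas the transferred Strichartz/bilinear estimates naturally require $b>\tfrac12$. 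The paper fixes this by interpolating the bilinear bound against the crude Sobolev estimate $\|u_1u_2\|_{L^2_tL^2_x}\les N_1\,\|u_1\|_{X^{0,1/4+}}\|u_2\|_{X^{0,1/4+}}$ to land at some $b'<\tfrac12$ at the cost of replacing $s_0$ by any $s>s_0$. Without these two ingredients --- the bilinear upgrade and the interpolation down to $b'<\tfrac12$ --- the contraction argument does not close.
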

 
 And we will also prove,
 
 \begin{theorem}\label{growth}

For $s\geq1$, let $u(t,x)$ be the solution to the defocusing cubic Schr\"odinger equation \eqref{eqn:sch}. Then for any time $t$, we have, $$\|u(t,x)\|_{H_x^s}\leq C \langle t\rangle^{\frac{(s-1)+}{(1-s_0)}}\|u_0\|_{H_x^s}.$$

 \end{theorem}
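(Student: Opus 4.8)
The plan is to reduce Theorem \ref{growth} to an application of Lemma \ref{poly-est} with the exponent $r$ determined by the improved Strichartz estimate. By Lemma \ref{poly-est}, it suffices to establish an increment bound of the form
\begin{equation}\label{eqn:increment}
\|u(t_0+\delta)\|_{H^s}^2 \leq \|u(t_0)\|_{H^s}^2 + C\|u(t_0)\|_{H^s}^{2-r}, \non
\end{equation}
for some fixed $\delta \in (0,1)$ and some $r \in (0,1)$, uniformly in $t_0$; and then read off $\|u(t)\|_{H^s} \les (1+|t|)^{1/r}$. I would choose $r$ so that $\frac{1}{r} = \frac{s-1}{1-s_0}+$ (possibly with an extra $\eps$ loss absorbed into the $(s-1)+$), i.e. $r = \frac{1-s_0}{s-1}-$. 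Since for the defocusing equation mass and energy conservation give a uniform $H^1$ bound on the solution, the constant $C$ and the length $\delta$ of the local existence interval can be taken independent of $t_0$, so that the induction in Lemma \ref{poly-est} runs without the constants degenerating.

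The core estimate is \eqref{eqn:increment}. First I would translate in time so $t_0 = 0$ and decompose $u = e^{it\Delta}u_0 + w$ where $w$ is the Duhamel term $-i\alpha\int_0^t e^{i(t-\tau)\Delta}|u|^2u\,d\tau$. Since the free evolution preserves $H^s$ exactly, the increment $\|u(\delta)\|_{H^s}^2 - \|u(0)\|_{H^s}^2$ is controlled by cross terms $\la e^{i\delta\Delta}u_0, w(\delta)\ra_{H^s}$ and the quadratic term $\|w(\delta)\|_{H^s}^2$. The point is that $w$ is better behaved than $u$ in a derivative-gaining sense: roughly, the worst frequency interaction in $|u|^2u$ that feeds high frequencies of $w$ has at least two low-frequency factors, and those factors are estimated in $H^1$ (hence by the conserved energy), while only one factor carries the top $H^s$ derivative. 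This is the Zhong/Bourgain type argument referenced for \cite{[zho]},\cite{[bou-growth]},\cite{[cat-wang]}. Quantitatively, in the $X^{s,b}$ framework of the previous sections, one bounds the high-frequency part of $w$ by
\begin{equation}
\|\chi^{|\nabla|}_N w\|_{X^{s,b}_\delta} \les N^{s_0+\eps}\,\|u_0\|_{H^s}^{\theta}\,\|u_0\|_{H^1}^{3-\theta}, \non
\end{equation}
for appropriate $\theta < 2$, where the power $N^{s_0}$ is exactly the Strichartz derivative loss $\frac{s_0}{2}$ showing up twice (once per pair of factors in $|u|^4$-type estimates) — this is where the improved exponent $s_0 = \frac{131}{416}$ enters. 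Summing over dyadic $N$ and using $s > 1 > s_0$ so the geometric series converges, one gets $\|w(\delta)\|_{H^s} \les \|u_0\|_{H^s}^{\theta}$, and then the cross term gives an increment of size $\|u_0\|_{H^s}^{1+\theta}$, i.e. $r = 2 - (1+\theta) = 1 - \theta + 1$; matching this to $r = \frac{1-s_0}{s-1}-$ fixes the bookkeeping. The careful tracking of how many derivatives land on the single "bad" factor versus the "good" low-frequency factors, and verifying that the resulting power of $\|u_0\|_{H^s}$ is strictly less than $2$, is the step I expect to be the main obstacle.

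Finally I would feed the increment bound into Lemma \ref{poly-est}: with the fixed $\delta$ and the verified $r$, the lemma gives $\|u(t)\|_{H^s} \les (1+|t|)^{1/r} = C\la t\ra^{\frac{(s-1)+}{1-s_0}}$, and reinstating the dependence on the initial data (the constants scale with $\|u_0\|_{H^s}$ and $\|u_0\|_{H^1}$, the latter being conserved) yields the stated bound $\|u(t,x)\|_{H^s_x} \leq C\la t\ra^{\frac{(s-1)+}{1-s_0}}\|u_0\|_{H^s_x}$. The main subtlety to watch is uniformity: the local existence time $\delta$ must not shrink as $t$ grows, which is guaranteed precisely because the defocusing conservation laws bound $\|u(t_0)\|_{H^1}$ uniformly and the subcritical local theory of Theorem \ref{local} only needs control of a low norm to guarantee a fixed-length existence interval for the high-frequency increment argument.
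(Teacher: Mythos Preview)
Your high-level strategy---apply Lemma~\ref{poly-est}, get a uniform local time from $H^1$ conservation, and prove a sub-quadratic increment---is correct, but the mechanism you propose for the increment bound has a genuine gap. Splitting $u=e^{it\Delta}u_0+w$ and controlling the cross term by $\|u_0\|_{H^s}\|w(\delta)\|_{H^s}$ requires $\|w(\delta)\|_{H^s}\les\|u_0\|_{H^s}^{\theta}$ with $\theta<1$, and this is simply false. The best the trilinear estimate gives is $\|u|u|^2\|_{X^{s,-b'}}\les\|u\|_{X^{s,b}}\|u\|_{X^{1,b}}^{2}$, which with Proposition~\ref{duhamel} yields only $\|w\|_{X^{s,b}_\delta}\les\|u_0\|_{H^s}$; the cross term is then $O(\|u_0\|_{H^s}^{2})$ and $r=0$. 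Your heuristic that high-frequency output of $|u|^2u$ always comes with ``at least two low-frequency factors'' is also wrong: the interactions $(n_1,n_2,n_3)\sim(N,N,N)$ with $n_1-n_2+n_3\sim N$ are present and carry the full $H^s$ weight, so the displayed bound $\|\chi_N^{|\nabla|}w\|_{X^{s,b}_\delta}\les N^{s_0+\eps}\|u_0\|_{H^s}^{\theta}$ cannot hold (and even as written it would not sum in $N$).

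What the paper does instead is compute $\frac{d}{dt}\|u\|_{\dot H^s}^2$ directly. Substituting the equation gives $2\,\mathrm{Im}\int D^s\bar u\,D^s(|u|^2u)$, and after expanding by Leibniz the diagonal term $4\,\mathrm{Im}\int|D^su|^2|u|^2$ \emph{vanishes} because the integrand is real. This cancellation is the missing idea: it is exactly what kills the high-high-high interaction you cannot handle. The surviving pieces are the distributed-derivative terms (easy, via Proposition~\ref{trilinear}) and the term $2\,\mathrm{Im}\int(D^s\bar u)^2u^2$, for which a frequency/modulation case analysis following Zhong together with Lemma~\ref{bistr-xsb} yields
\[
\int_0^{T_0}\frac{d}{dt}\|u\|_{\dot H^s}^2\,dt\ \les\ \|u\|_{X^{s,b}_{T_0}}\|u\|_{X^{s-\sigma,b}_{T_0}}\|u\|_{X^{1,b}_{T_0}}^{2},\qquad \sigma=(1-s_0)-.
\]
Interpolating $X^{s-\sigma,b}$ between $X^{1,b}$ and $X^{s,b}$ then gives the increment with $r=\frac{1-s_0}{s-1}-$, hence $1/r=\frac{(s-1)+}{1-s_0}$ as claimed.
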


\subsection{Proof of The Results}

\subsubsection{Proof of Strichartz estimates}

  To be able to prove \eqref{str}, we will use a counting argument by Huxley, \cite{[hux]}:
  
  \begin{theorem}\label{hux}
 
  For $a,b,c \in \mathbb R$, let $Q = Q(m, n) = am^2+bmn+cn^2$ be a positive definite quadratic form, where $a > 0$, $D := 4ac - b^2 > 0$. For $x$ large, we have $$\#\{(m, n) \in \Z^2
: Q(m, n) \leq x\} = \frac{2\pi}{\sqrt{D}}x+R(x), $$ where $R(x)\leq x^{(\frac{131}{416})+}$.
  
  \end{theorem}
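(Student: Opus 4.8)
The plan is to reduce the count for a general positive definite form to a lattice-point problem for a fixed, strictly convex oval and then run the Bombieri--Iwaniec exponential-sum method in the form sharpened by Huxley. First I would observe that the matrix of $Q$ has determinant $D/4$, so the region $E_x:=\{(m,n)\in\R^2:Q(m,n)\le x\}$ is an ellipse of area $\frac{2\pi}{\sqrt D}x$; thus the theorem is precisely the statement that $\#(E_x\cap\Z^2)$ equals $\mathrm{area}(E_x)$ up to $O(x^{131/416+\eps})$. Since $Q(v)\le x \iff v\in\sqrt x\,E_1$, writing $r=\sqrt x$ this becomes the assertion that a real dilate $rE_1$ of the fixed analytic convex region $E_1$ contains $\mathrm{area}(E_1)\,r^2+O(r^{131/208+\eps})$ integer points --- the Huxley bound for the Gauss circle problem, stated for a general oval. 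The positive-definiteness hypothesis $D>0$ is exactly what guarantees that $\partial E_1$ is smooth with curvature bounded above and below in terms of $a$ and $D$, so every step below is uniform in the coefficients $a,b,c$.

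Next I would transfer the problem to exponential sums in the classical way. Smoothing the indicator of $rE_1$ against a bump function at spatial scale $\delta$ and applying Poisson summation expresses $\#(rE_1\cap\Z^2)$ as $\mathrm{area}(E_1)\,r^2$, plus (effectively) the truncated dual sum $\sum_{0<|k|\le K}\widehat{\mathbf 1_{rE_1}}(k)$ with $K\asymp\delta^{-1}$, plus an error $O(\delta r)$ from the smoothing and a tail $O(r^{-A})$ (any $A$) from the truncation, with $\delta$ taken to be a small negative power of $r$. By two-dimensional stationary phase on the boundary, $\widehat{\mathbf 1_{rE_1}}(k)$ is a sum of two terms of size $r^{1/2}|k|^{-3/2}$ carrying oscillatory factors $e\big(r\,\phi_{\pm}(k/|k|)\big)$, where the phases $\phi_\pm$ come from the support function of the polar dual of $E_1$. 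Hence, after a dyadic decomposition $|k|\asymp M$, the error $R(x)$ is controlled by
\begin{equation*}
r^{1/2}\,M^{-3/2}\sum_{|k|\asymp M} e\big(r\,\phi_{\pm}(k/|k|)\big) \qquad (M\le K),
\end{equation*}
and along each dyadic arc this is a one-dimensional exponential sum $\sum_{n\asymp N} e\big(t\,F(n)\big)$ with $t\asymp r$ and $F$ smooth, the curvature bound on $\partial E_1$ forcing $F''$ to be bounded away from $0$ and higher derivatives to have the expected order on the relevant ranges.

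The analytic heart is to estimate these sums with the quality needed to reach the exponent $131/208$. I would follow Huxley's discrete Hardy--Littlewood method: partition the range of $n$ into short blocks, on each block replace $F$ by its degree two or three Taylor polynomial, and reduce the mean square of the sum to counting near-coincidences among the linear and quadratic coefficients of these polynomials --- the \emph{first} and \emph{second spacing problems}. The first spacing problem is handled by a large-sieve type inequality; the second, which is where Iwaniec--Mozzochi's $7/22$ and Huxley's successive refinements to $23/73$ and then $131/208$ all live, requires the double large sieve together with a careful analysis of the geometry of Huxley's resonance curves. Feeding the resulting mean-value bounds back through the dyadic sums over $M$ and $N$ and absorbing divisor-type and logarithmic losses into the $\eps$ yields $R(x)\le x^{131/416+\eps}$ with an implied constant depending only on $Q$ and $\eps$. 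I expect the second spacing problem --- obtaining the sharp resonance count uniformly in the parameters and feeding it through without loss --- to be by far the main obstacle; the reductions in the first two paragraphs are routine, and the only place the specific form $Q$ enters is the two-sided curvature bound on $E_1$, which positive-definiteness supplies. In a paper focused on the Schr\"odinger equation one would of course simply invoke Huxley's theorem \cite{[hux]} here rather than reprove these estimates.
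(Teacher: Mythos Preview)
Your proposal is a faithful outline of how Huxley's theorem is actually proved --- the reduction to a dilated oval, Poisson summation and stationary phase to pass to exponential sums, and then the Bombieri--Iwaniec/Huxley machinery (first and second spacing problems, double large sieve, resonance curves) to reach the exponent $131/416$. You also correctly identify the second spacing problem as the crux, and that positive-definiteness of $Q$ is precisely what supplies the two-sided curvature bound on $\partial E_1$.

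That said, the paper does not prove Theorem~\ref{hux} at all: it is stated and attributed to Huxley~\cite{[hux]} as a black box, and then applied immediately to bound $|G_l|$ in the Strichartz argument. So there is nothing to compare --- your own final sentence anticipates this exactly. In the context of this paper the ``proof'' is a one-line citation; what you have written is a correct sketch of what lies behind that citation, but it goes well beyond what the paper attempts.
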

 
 \begin{theorem}
 Let $f\in L_x^2$ such that $supp(\widehat{f})\in B(0,N)$, then $$\|e^{it\Delta}f\|_{L^4_tL^4_x}\les N^{(\frac{s_0}{2})+}\|f\|_{L_x^2}.$$
 \end{theorem}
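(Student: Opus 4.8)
The plan is to reduce the $L^4_t L^4_x$ estimate to an $L^2$ estimate via Plancherel, thereby converting it into a counting problem that Huxley's theorem (Theorem \ref{hux}) can handle. Write $f = \sum_{|k| \leq N} a_k e^{ik\cdot x}$ where $k = (m,n)$ ranges over the dual lattice, and expand
\[
e^{it\Delta} f = \sum_{|k|\leq N} a_k e^{i(k\cdot x - tQ(k))}.
\]
Then $\|e^{it\Delta}f\|_{L^4_tL^4_x}^4 = \|(e^{it\Delta}f)^2\|_{L^2_tL^2_x}^2$, and the Fourier coefficients of $(e^{it\Delta}f)^2$ are supported at frequencies $(\xi, \lambda)$ with $\xi = k_1 + k_2$, $\lambda = Q(k_1) + Q(k_2)$, $|k_i| \leq N$. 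By Plancherel in $(t,x)$,
\[
\|e^{it\Delta}f\|_{L^4_tL^4_x}^4 = \sum_{\xi,\lambda} \Bigl| \sum_{\substack{k_1+k_2=\xi \\ Q(k_1)+Q(k_2)=\lambda}} a_{k_1} a_{k_2} \Bigr|^2.
\]
By Cauchy--Schwarz applied to the inner sum, this is bounded by
\[
\sup_{\xi,\lambda} \#\{(k_1,k_2) : k_1+k_2=\xi,\ Q(k_1)+Q(k_2)=\lambda,\ |k_i|\leq N\} \cdot \Bigl( \sum_k |a_k|^2 \Bigr)^2,
\]
so everything comes down to bounding that counting quantity by $N^{s_0 + }$.

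Next I would estimate the number of representations. Fixing $\xi$ and writing $k_2 = \xi - k_1$, the constraint $Q(k_1) + Q(\xi - k_1) = \lambda$ becomes, after expanding the quadratic form, $2Q(k_1) - 2B(k_1,\xi) + Q(\xi) = \lambda$, where $B$ is the bilinear form associated to $Q$; completing the square in $k_1$ this says $Q(k_1 - \tfrac{1}{2}\xi) = \tfrac{1}{2}\lambda - \tfrac{1}{4}Q(\xi) =: \mu$. So the count equals the number of lattice points $k_1$ with $Q(k_1 - \tfrac12 \xi) = \mu$; shifting by the (generally half-integer) vector $\tfrac12\xi$, this is the number of points of a fixed translate of $\Z^2$ on the level curve $\{Q = \mu\}$ with $\mu \lesssim N^2$. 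To bound points on a single level set one uses the standard device of differencing: the number of lattice points with $Q = \mu$ is at most the number with $\mu \le Q \le \mu + 1$, which by Theorem \ref{hux} applied at $x = \mu+1$ and $x = \mu$ (the main terms $\tfrac{2\pi}{\sqrt D}(\mu+1)$ and $\tfrac{2\pi}{\sqrt D}\mu$ differ by $O(1)$) is $\lesssim \mu^{s_0+} + 1 \lesssim N^{s_0+}$. The only subtlety is that Huxley's theorem as quoted is for $\#\{Q(m,n)\le x\}$ on the lattice $\Z^2$, whereas we need it on a translated lattice $\Z^2 + \tfrac12\xi$; this is handled either by noting that $Q(k_1 - \tfrac12\xi)$ with $k_1 \in \Z^2$ equals $\tilde Q$ evaluated on $\Z^2$ for the shifted form, or—more cleanly—by observing that $2k_1 - \xi$ runs over a fixed coset of $2\Z^2$ in $\Z^2$ and $Q(k_1 - \tfrac12\xi) = \tfrac14 Q(2k_1 - \xi)$, reducing to counting integer points on $\{Q = 4\mu\}$ in a fixed residue class mod $2$, which is still controlled by the same differenced form of Theorem \ref{hux} up to harmless constants.

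Finally, collecting the bounds: $\|e^{it\Delta}f\|_{L^4_tL^4_x}^4 \lesssim N^{s_0+} \|f\|_{L^2_x}^4$, and taking fourth roots gives $\|e^{it\Delta}f\|_{L^4_tL^4_x} \lesssim N^{(s_0/4)+}\|f\|_{L^2_x}$. Wait—this needs care: the stated exponent is $s_0/2$, not $s_0/4$, so I expect the actual argument localizes in time first (on $\T$, or using a time cutoff so that $\lambda = \tau$ is forced only up to $O(1)$ rather than exactly), effectively summing the counting estimate over $O(N^2)$ values of a missing parameter with a gain, or else exploits that each $Q(k_i)$ individually is determined up to $O(1)$ so that one of the two quadratic constraints is ``free'' and contributes an extra averaging. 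The main obstacle, and the step I would spend the most effort on, is precisely tracking how the time-frequency variable interacts with the level-set constraint so that the exponent comes out to $s_0/2$ rather than the naive $s_0/4$ or the lossy $1/2$; this is where the genuine improvement over the trivial estimate lives, and where one must be careful that summing Huxley's error term over a dyadic range of $\lambda$'s does not destroy the gain. The rest is bookkeeping: a Littlewood--Paley decomposition to pass from $\mathrm{supp}(\widehat f) \subset B(0,N)$ to the dyadic pieces, and summation of the geometric-type series in the dyadic parameter.
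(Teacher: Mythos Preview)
Your outline has the right large-scale structure (square the $L^4$ norm, Cauchy--Schwarz, reduce to a lattice-point count controlled by Huxley), but the Plancherel-in-time step as you wrote it is not valid, and this is exactly the gap you flag at the end without resolving. The time variable lives on a bounded interval (the statement is $L^4_t(\T)$, i.e.\ $L^4([0,1])$), and since the temporal frequencies $Q(k_1)+Q(k_2)$ are irrational you cannot apply Parseval on $\T$ to get an exact constraint $Q(k_1)+Q(k_2)=\lambda$. (On $\R$ the $L^4_t$ norm is simply infinite, so that route is closed too.) The paper supplies the missing mechanism: an almost-orthogonality lemma
\[
\Bigl\|\sum_n b_n e^{ita_n}\Bigr\|_{L^2([0,1])}^2 \ \les\ \sum_{j\in\Z}\Bigl(\sum_{|a_n-j|\le 1/2}|b_n|\Bigr)^2,
\]
which replaces your exact constraint by the \emph{approximate} one $|Q(n)+Q(m-n)-k|\le 1/2$. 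After the parallelogram identity $Q(n)+Q(m-n)=\tfrac12\bigl(Q(2n-m)+Q(m)\bigr)$ this becomes $|Q(2n-m)-l|\le 1$, so the relevant set is the annulus $G_l=\{a\in\Z^2:|Q(a)-l|\le 1\}$ rather than a single level curve.

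This also fixes your exponent bookkeeping. Huxley's error term, differenced, gives $|G_l|\les l^{s_0+}\les N^{2s_0+}$ (note the $2$: the annulus is at radius $\sim N$, so $l\sim N^2$). Cauchy--Schwarz in the inner sum produces $|G_l|^{1/2}\les N^{s_0+}$, and this factor appears in the bound for $\|e^{it\Delta}f\|_{L^4}^{2}$, not $\|e^{it\Delta}f\|_{L^4}^{4}$. One more square root yields the claimed $N^{s_0/2+}$. Your computation gave $s_0/4$ because you were (i) counting on a zero-width level curve instead of a unit-width annulus, and (ii) working with the fourth power rather than the square; once the almost-orthogonality lemma is in place both issues disappear and the exponent falls out without any further averaging over $\lambda$.
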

 
   \begin{proof}
   \begin{eqnarray}
   \|e^{it\Delta}f\|_{L^4_tL^4_x}^2 &=& \|(e^{it\Delta}f)^2\|_{L^2_tL^2_x}\non\\
   &=& \Big{\|} \big{[} \sum_{m \in \Z^2}|\sum_{n \in \mathbb Z^2} \widehat{f}(n)\widehat{f}(m-n)e^{-it(Q(n)+Q(m-n))}|^2 \big{]}^{1/2}\Big{\|}_{L_t^2}\non\\
   &=&\Big{[}\sum_{m\in \Z^2}\big{\|}\sum_{n\in \Z^2}\widehat{f}(n)\widehat{f}(m-n)e^{-it(Q(n)+Q(m-n))}\big{\|}_{L_t^2}^2\Big{]}^{1/2}\non\\
   &\les&\Big{[}\sum_{m \in \Z^2} \big{[}\sum_{k \in \Z}(\sum_{|Q(n)+Q(m-n)-k|\leq 1/2}|\widehat{f}(n)\widehat{f}(m-n)|)^2\big{]}\Big{]}^{1/2},\non
   \end{eqnarray}

  where, to pass to the last inequality we used:
  
  \begin{lemma}
  $\|\sum_n e^{ita_n}b_n\|_{L^2([0,1])}^2\les \sum_j (\sum_{|a_n-j|\leq 1/2} |b_n|)^2$.
  \end{lemma}
  \begin{proof}
 For any finite sum over $n$, write $\|\sum_n e^{ita_n}b_n\|_{L^2([0,1])}^2=\|\sum_j \sum_{|a_n-j|\leq1/2} e^{ita_n}b_n\|_{L^2([0,1])}^2$. Hence, for a bump function $\phi$ s.t. $\phi(t)=1$ in $[0,1]$ we have
  \begin{eqnarray}
  \|\sum_n b_ne^{ita_n}\|_{L^2[0,1]}^2 &\leq&\|\sum_n b_ne^{ita_n}\phi(t)\|_{L^2(\mathbb{R})}^2\non\\
  &=& \|\sum_n b_n\widehat{\phi}(\xi-a_n)\|_{L^2(\mathbb{R})}^2\non\\
  &\les& \|\sum_n |b_n|\frac{1}{\langle\xi-a_n\rangle^{\alpha}}\|_{L^2(\mathbb{R})}^2\non\\
  &=& \|\sum_j \sum_{|a_n-j|\leq1/2} |b_n|\frac{1}{\langle\xi-a_n\rangle^{\alpha}}\|_{L^2(\mathbb{R})}^2\non\\
  &\les& \|\sum_j \frac{1}{\langle \xi-j\rangle^{\alpha}}\sum_{|a_n-j|\leq1/2} |b_n|\|_{L^2(\mathbb{R})}^2\non\\
  &\leq& \sum_j (\sum_{|a_n-j|\leq 1/2} |b_n|)^2.\non
    \end{eqnarray}

    Here, to pass to the second line we used Plancherel's equality. In the third line we used that the Fourier transform of $\phi$ is a Schwartz function and decays faster than any polynomial, and thus we can choose an $\alpha>1$. Also to pass to the last line we used Young's inequality. 
  \end{proof}

  Then, write $|Q(n)+Q(m-n)-k|\leq 1/2$ as $|Q(2n-m)+Q(m)-2k|\leq 1$ and letting $2n\in m+G_l$ where $l=2k-Q(m)$ and $G_l=\{a\in \Z^2 :|Q(a)-l|\leq 1\}$, we get
  \begin{eqnarray}
  \|e^{it\Delta}f\|_{L^4_tL^4_x}^2 &\les& \Big{[}\sum_{m \in \Z^2}\big{[}\sum_{l \in \Z}|\sum_{2n\in m+G_l}\hat{f}(n)\hat{f}(m-n)|^2\big{]}\Big{]}^{1/2}\non\\
  &\les& \Big{[}\sum_{m \in \Z^2}\big{[}\sum_{l \in \Z}|G_l|^{1/2}|\sum_{2n\in m+G_l}\hat{f}(n)^2\hat{f}(m-n)^2|^{1/2}\big{]}^2\Big{]}^{1/2},\non
   \end{eqnarray}

since $G_l=\{a\in \Z^2 :|Q(a)|\leq 1+l\}-\{a\in \Z^2 :|Q(a)|< 1-l\}$, using Theorem \ref{hux}, we get $$|G_l|\les l^{s_0+},$$ and hence, using $l\les N^2$, we obtain,

  \begin{eqnarray}
  \|e^{it\Delta}f\|_{L^4_tL^4_x}^2 &\les& N^{s_0+}\Big{[}\sum_{m \in \Z^2}\big{[}\sum_{l \in \Z}|\sum_{2n\in m+G_l}\hat{f}(n)^2\hat{f}(m-n)^2|\big{]}\Big{]}^{1/2},\non\\
   &\les& N^{s_0+}\big{[}\sum_{m \in \Z^2}|\sum_{n \in \Z^2}\hat{f}(n)^2\hat{f}(m-n)^2|\big{]}^{1/2},\non\\
   &\les& N^{s_0+}\|f\|_{L_x^2}^2.\non
  \end{eqnarray}
   
   Therefore, the result.
   \end{proof}

\subsubsection{Proof of Theorem \ref{local}}   

   As mentioned above, to prove local well-posedness we use Bourgain spaces. Since Bourgain spaces behave nice under linear evolution, what we need to show is that the nonlinear part of the Duhamel formula also behaves as nice. For that we need:
   
   \begin{prop}\label{duhamel}
   
   For $b,b'$ such that $0\leq b+b'<1$, $0\leq b'<1/2$, then we have $$\|\int_0^t e^{i\Delta(t-\tau)}f(\tau)d\tau\|_{X_T^{s,b}}\les T^{1-b-b'}\|f\|_{X_T^{s,-b'}},$$ for $T\in [0,1]$.
   
   \end{prop}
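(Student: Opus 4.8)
The plan is to prove Proposition~\ref{duhamel} by the standard Bourgain-space argument for the Duhamel operator, combining a smooth time cutoff with the elementary estimates for the $X^{s,b}$ norms of the free evolution and of time-localized functions. The point is that the $X^{s,b}$ spaces are defined through the weight $\langle \tau - Q(m,n)\rangle^b$, so by setting $v(t,x) = e^{-it\Delta}u(t,x)$ the operators in play become translation-invariant in the $Q(m,n)$-shifted frequency variable, and everything reduces to one-dimensional estimates in the dual time variable $\tau$ for each fixed $(m,n)$. Concretely, I would first reduce to the case where $f$ is globally defined on $\R\times\T_\theta^2$ (by taking an extension nearly realizing the $X_T^{s,-b'}$ infimum and a standard argument to restore the restricted norm on the left at the end), and fix a bump function $\psi\in C_c^\infty(\R)$ with $\psi\equiv 1$ on $[-1,1]$, writing $\psi_T(t) = \psi(t/T)$.

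The core estimate I would isolate is the scalar inequality
\begin{equation}\non
\Big\| \psi_T(t)\int_0^t g(\tau)\,d\tau \Big\|_{H_t^b(\R)} \les T^{1-b-b'}\,\|g\|_{H_t^{-b'}(\R)},
\end{equation}
valid for $0\le b+b'<1$ and $0\le b'<1/2$. Granting this, I would apply it with $g(\tau)$ replaced by (the inverse time-Fourier transform of) $\widehat{f}(m,n,\tau+Q(m,n))$ for each frequency $(m,n)$, multiply through by the spatial weight $\langle |m|+|n|\rangle^s$, and sum in $(m,n)$ using Plancherel; since $\psi_T(t)\int_0^t e^{i(t-\tau)\Delta}f\,d\tau$ agrees with $\int_0^t e^{i(t-\tau)\Delta}f\,d\tau$ on $[0,T]$ whenever $T\le 1$, this yields the claimed bound on the restricted norm. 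The scalar inequality itself I would prove by splitting $\int_0^t g = \int_{\R} g\cdot \indicator_{[0,t]}$; expanding the Duhamel kernel via its time Fourier transform, one separates the contribution where $\langle\tau\rangle$ is comparable to the ``resonant'' frequency from the non-resonant piece, and the non-resonant piece is handled by the elementary bound $\|\psi_T\, h\|_{H_t^b}\les T^{1/2-b-b'}\|h\|_{H_t^{-b'}}$ (rescaling $t\mapsto t/T$ and using $0\le b<1/2$), while the resonant piece is where the full gain $T^{1-b-b'}$ comes from, after integrating the kernel $1/\langle\tau\rangle$ against the cutoff. This is exactly the computation in Ginibre–Velo / the appendix of Kenig–Ponce–Vega–type references; I would cite it and reproduce the short version.

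The main obstacle is the bookkeeping at the endpoint-type regime $b+b'$ close to $1$ and $b'$ close to $1/2$: the naive splitting loses a logarithm or forces $b+b'<1$ strictly, and one has to be careful that the gain is genuinely $T^{1-b-b'}$ rather than $T^{1/2-b-b'}$ (which would be useless, since we need $1-b-b'>0$ with $b>1/2$ available for the linear part elsewhere). The resolution is that the $\int_0^t$ integration supplies an extra factor of $t$, hence an extra $T$ after the cutoff, compared to plain multiplication by $\psi_T$; making this precise requires expanding $\indicator_{[0,t]}(\tau)$ and tracking that the borderline frequency interaction $|\tau|\sim|\sigma|$ (where $\sigma$ is the dual variable on the output side) contributes $\int_{|\tau|\les 1/T}\langle\tau\rangle^{-1}\,d\tau \sim \log(1/T)$, which is absorbed by any $\eps$ room in $1-b-b'>0$, or handled exactly by the refined estimate in the literature. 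Apart from this regime the argument is routine, and I would present the reduction to the scalar inequality in full and then invoke the known scalar bound.
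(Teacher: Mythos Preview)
Your proposal is correct and follows the standard Bourgain--Ginibre argument; the paper itself does not give a proof of this proposition, but simply remarks that it is standard and cites \cite{[gin]}, which is precisely the reference whose computation you are sketching. One minor remark: the parenthetical ``using $0\le b<1/2$'' in your non-resonant step is not quite right, since the proposition allows (and the application requires) $b>1/2$; the actual mechanism in the Ginibre computation is that the extra $1/\tau$ from the time-integration supplies the improvement from $T^{1/2-b-b'}$ to $T^{1-b-b'}$ without any restriction to $b<1/2$, as you yourself note in the paragraph on obstacles.
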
 

   The proof of this result is standard, see \cite{[gin]}. Hence, to be able to use the Banach Fixed Point Theorem, we have to control the right hand side of the inequality in the appropriate $X^{s,b}$ space. And since our nonlinearity is cubic, that means have to show a trilinear estimate:
   
   \begin{prop}\label{trilinear}
   
   For $s> s_0$, there exists $b, b'$ satisfying the conditions of Proposition \ref{duhamel}, such that, $$\|u_1 u_2 \overline{u_3}\|_{X_T^{s,-b'}}\les \|u_1\|_{X_T^{s,b}}\|u_2\|_{X_T^{s,b}}\|u_3\|_{X_T^{s,b}}.$$
   
   \end{prop}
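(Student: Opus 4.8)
The plan is to run the standard $X^{s,b}$ argument, feeding in the improved $L^4_tL^4_x$ Strichartz bound proved above. First I would pass from the restricted norms $X_T^{s,b}$ to the global ones: choosing extensions $\tilde u_i$ of $u_i$ with $\|\tilde u_i\|_{X^{s,b}}\les\|u_i\|_{X_T^{s,b}}$, it suffices to establish the global trilinear bound $\|\tilde u_1\tilde u_2\overline{\tilde u_3}\|_{X^{s,-b'}}\les\prod_i\|\tilde u_i\|_{X^{s,b}}$ (the constant may be allowed to depend on $T\le 1$). By duality this is equivalent to the quadrilinear estimate
$$\Big|\int_{\R\times\T_\theta^2}u_1u_2\overline{u_3}\,\overline{v}\,dx\,dt\Big|\les\|u_1\|_{X^{s,b}}\|u_2\|_{X^{s,b}}\|u_3\|_{X^{s,b}}\|v\|_{X^{-s,b'}},$$
and after Plancherel this is a bound for a convolution sum over frequencies $n_1,n_2,n_3,n_4\in\Z^2$ with $n_1+n_2-n_3-n_4=0$ and modulation variables $\tau_1,\tau_2,\tau_3,\tau_4$ with $\tau_1+\tau_2-\tau_3-\tau_4=0$.

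Next I would dyadically decompose $u_i=\sum_{N_i}P_{N_i}u_i$ and $v=\sum_{N_4}P_{N_4}v$ (Littlewood--Paley projections to frequencies $|n|\sim N_i$). The frequency relation forces $N_4\les\max(N_1,N_2,N_3)$ and, more importantly, forces the two largest among $N_1,N_2,N_3,N_4$ to be comparable; since only the sizes of the frequencies enter, I may assume $N_1=\max(N_1,N_2,N_3)$. The output weight $\langle n_4\rangle^s$ is controlled by $\max_i\langle n_i\rangle^s$, so the $s$ derivatives can always be placed on the highest-frequency input, and the leftover frequency factor $N_4^s(N_1N_2N_3)^{-s}$ is $\les 1$ with a strictly positive power of $\max(N_1,N_2,N_3)$ to spare — this spare power will drive the convergence of the dyadic series. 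The multilinear input is the Strichartz estimate in its transferred form, $\|P_Nf\|_{L^4_tL^4_x}\les N^{(s_0/2)+}\|P_Nf\|_{X^{0,b}}$ for $b>1/2$, used via H\"older's inequality after pairing the four factors two at a time, e.g.\ through the bilinear consequence $\|P_{N_1}u_1\overline{P_{N_3}u_3}\|_{L^2_tL^2_x}\les(N_1N_3)^{(s_0/2)+}\|P_{N_1}u_1\|_{X^{0,b}}\|P_{N_3}u_3\|_{X^{0,b}}$.

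The one place this does not run smoothly is the dual function: $v$ lies only in $X^{-s,b'}$ with $b'<1/2$, so the transfer principle does not place $P_{N_4}v$ in $L^4_tL^4_x$. To handle this I would split according to the size of the resonance function $H:=Q(n_1)+Q(n_2)-Q(n_3)-Q(n_1+n_2-n_3)$, which satisfies $\max(\langle\sigma_1\rangle,\langle\sigma_2\rangle,\langle\sigma_3\rangle,\langle\sigma_4\rangle)\ges\langle H\rangle$ with $\sigma_i=\tau_i-Q(n_i)$. In the non-resonant regime $\langle H\rangle\gg1$ one of the modulation weights is large, and extracting a power of $\langle H\rangle$ both compensates for the missing $L^4$ control of $v$ and supplies extra decay for the summation. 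In the genuinely resonant regime, where the resonance function and all modulations are $\les 1$, the weights are useless and the four pieces behave like free evolutions whose frequencies are tied by the additional relation $Q(n_1)+Q(n_2)\approx Q(n_3)+Q(n_4)$; here one bounds the sum by counting lattice points in a unit band of a positive definite binary quadratic form whose values are of size $\les N^2$, exactly as in the proof of the Strichartz estimate, and Huxley's Theorem~\ref{hux} gives $\les N^{(2s_0)+}$ such points. Against the net frequency gain $N^{-2s}$ produced by the three factors in $X^{s,b}$ versus $v$ in $X^{-s,b'}$ (all $N_i$ comparable to $N$ in this regime), the single remaining dyadic series is summable precisely when $s>s_0$; in the non-resonant regimes the modulation gain leaves at least this much room, so $s>s_0$ suffices overall, and one may take, say, $b=\frac12+$ and $b'=\frac12-$, which meet the conditions of Proposition~\ref{duhamel}.

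I expect the resonant regime to be the real obstacle: there the dispersive smoothing is unavailable, so the whole estimate rests on the number-theoretic counting, and it is this case — not the generic one — that pins the threshold at $s>s_0$ rather than at something smaller. The subsidiary difficulty is the bookkeeping needed to carry the dual function with only $b'<1/2$ regularity in the modulation variable (a restriction forced on us by Proposition~\ref{duhamel}), which is exactly what the $\langle H\rangle$-splitting is designed to absorb. Structurally this is the same argument as the trilinear estimate of Catoire and Wang \cite{[cat-wang]}, and the improvement recorded here comes solely from substituting the better exponent furnished by Theorem~\ref{hux}.
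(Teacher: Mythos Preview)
Your outline has two concrete gaps.

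\textbf{The bilinear input is too weak.} You use only the trivial pairing
$\|P_{N_i}u_i\,P_{N_j}u_j\|_{L^2_{t,x}}\les (N_iN_j)^{(s_0/2)+}\|P_{N_i}u_i\|_{X^{0,b}}\|P_{N_j}u_j\|_{X^{0,b}}$, i.e.\ H\"older plus the linear $L^4$ bound. Feed this into the quadrilinear form with (your convention) $N_1=\max$ and $N_4\sim N_1$: after absorbing the $s$-weights you are left with a coefficient $\sim N_1^{s_0+}(N_2N_3)^{(s_0/2)-s+}$, which does \emph{not} sum in the top frequency $N_1$. Your sentence ``with a strictly positive power of $\max(N_1,N_2,N_3)$ to spare'' is not correct: the only spare decay sits on the two \emph{lowest} frequencies. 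The paper fixes exactly this by proving the refinement
\[
\|P_{N_i}u_i\,P_{N_j}u_j\|_{L^2_{t,x}}\les \min(N_i,N_j)^{s_0+}\|P_{N_i}u_i\|_{X^{0,b}}\|P_{N_j}u_j\|_{X^{0,b}},
\]
obtained by tiling the higher-frequency factor into cubes of the smaller sidelength and using almost orthogonality of the outputs. With this in hand the quadrilinear bound becomes $(N_{(1)}N_{(2)})^{s_0+}$ in the two smallest frequencies, the $s$-weights pair the two largest by Cauchy--Schwarz, and the remaining sums converge for $s>s_0$.

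\textbf{The $b'<\tfrac12$ issue.} Your resonance splitting is a legitimate strategy, but the assertion ``all $N_i$ comparable to $N$ in this regime'' is false: $n_1=n_3$, $n_2=n_4$ is exactly resonant for arbitrary sizes, so the resonant set is not confined to the diagonal and your counting does not close there. The paper avoids the modulation analysis entirely: it interpolates the refined bilinear bound (which needs $b>\tfrac12$) against the elementary estimate $\|u_1u_2\|_{L^2_{t,x}}\les N_1\|u_1\|_{X^{0,1/4+}}\|u_2\|_{X^{0,1/4+}}$ (from $L^4_tL^\infty_x\cdot L^4_tL^2_x$ and Bernstein), producing for every $s>s_0$ a bilinear estimate $\|u_1u_2\|_{L^2_{t,x}}\les N_1^{s}\|u_1\|_{X^{0,b'}}\|u_2\|_{X^{0,b'}}$ with some $b'<\tfrac12$. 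This single interpolated bound is then applied symmetrically to both pairs in the quadrilinear form, so the dual factor $v\in X^{-s,b'}$ is handled with no separate case analysis.
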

   
   Hence, it is clear that once we prove Proposition \ref{trilinear}, Theorem \ref{local}, i.e. the local well-posedness will follow.
   
    We will prove Proposition \ref{trilinear} using the duality argument $\|u_1 u_2 \overline{u_3}\|_{X_T^{s,-b'}}=\sup_{(\|u_4\|_{X_T^{-s,b'}}=1)}\int\int u_1 u_2 \overline{u_3}\overline{u_4}dxdt$. Hence, to prove Proposition \ref{trilinear}, we will bound the integral on the right hand side of this equality by $\|u_1\|_{X_T^{s,b}}\|u_2\|_{X_T^{s,b}}\|u_3\|_{X_T^{s,b}}.$ 
   
   \begin{proof}{(Proof of Proposition \ref{trilinear})}

    We first need a fundamental bilinear Strichartz estimate:
    
    \begin{lemma}\label{bistr}

    If $u_1, u_2\in L_x^2$ s.t. $supp(\wh{u_1})\in B(0,N_1)$ and $supp(\wh{u_2})\in B(0,N_2)$ with $N_1\leq N_2$. Then we have $$\|e^{it\Delta}u_1 e^{it\Delta}u_2\|_{L_x^2}\les N_1^{s_0+}\|u_1\|_{L_x^2} \|u_2\|_{L_x^2}.$$  
    
    \end{lemma}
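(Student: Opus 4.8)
The plan is to deduce this bilinear estimate from the linear $L^4$ Strichartz estimate proved above, by means of an almost-orthogonality (Galilean) decomposition of the higher-frequency factor; in this way no new lattice-point count is needed. Here I read the left-hand side as the space--time norm $\|e^{it\Delta}u_1\, e^{it\Delta}u_2\|_{L_t^2L_x^2}$, which is the natural one and recovers the preceding $L^4$ estimate when $u_1=u_2$.

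First I would partition the frequency ball $B(0,N_2)\supset supp(\wh{u_2})$ into an essentially disjoint family of cubes $\cQ_j=c_j+[-N_1/2,N_1/2)^2$ of side $\sim N_1$ with integer centres $c_j$, and let $u_{2,j}$ be the frequency restriction of $u_2$ to $\cQ_j$, so that $u_2=\sum_j u_{2,j}$. The spatial frequencies of $e^{it\Delta}u_1\, e^{it\Delta}u_{2,j}$ lie in $B(0,N_1)+\cQ_j$, and since the $\cQ_j$ are $\sim N_1$-separated cubes, each point of $\Z^2$ belongs to only $O(1)$ of these translated balls; hence, by Plancherel in $x$, one gets the almost-orthogonality bound $\|e^{it\Delta}u_1\, e^{it\Delta}u_2\|_{L_t^2L_x^2}^2\les\sum_j\|e^{it\Delta}u_1\, e^{it\Delta}u_{2,j}\|_{L_t^2L_x^2}^2$.

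Next, for each $j$ I would apply $\|fg\|_{L_t^2L_x^2}\le\|f\|_{L_t^4L_x^4}\|g\|_{L_t^4L_x^4}$ and then invoke the $L^4$ Strichartz estimate at scale $N_1$ for each factor. For $u_1$ this is immediate since $supp(\wh{u_1})\subset B(0,N_1)$. For $u_{2,j}$, whose frequency support is a side-$N_1$ cube that is \emph{not} centred at the origin, I would first strip the modulation: writing $u_{2,j}=e^{ic_j\cdot x}v_j$ with $supp(\wh{v_j})$ in a ball of radius $\sim N_1$ about the origin, one checks that $e^{it\Delta}(e^{ic_j\cdot x}v_j)$ equals $e^{it\Delta}v_j$ pre-composed with a time-dependent, measure-preserving spatial translation by $t\,\nabla Q(c_j)$ and multiplied by a unit-modulus phase, so $\|e^{it\Delta}u_{2,j}\|_{L_t^4L_x^4}=\|e^{it\Delta}v_j\|_{L_t^4L_x^4}\les N_1^{(s_0/2)+}\|u_{2,j}\|_{L_x^2}$. (Equivalently, one simply notes that the only feature of the frequency support used in the proof of the Strichartz estimate is its diameter, through the bound $l=2k-Q(m)\les N^2$, which is translation-invariant.) Collecting the powers of $N_1$, summing in $j$, and using $\sum_j\|u_{2,j}\|_{L_x^2}^2=\|u_2\|_{L_x^2}^2$, I obtain $\|e^{it\Delta}u_1\, e^{it\Delta}u_2\|_{L_t^2L_x^2}^2\les N_1^{2s_0+}\|u_1\|_{L_x^2}^2\|u_2\|_{L_x^2}^2$, and the lemma follows on taking square roots.

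The main point to get right is that the linear estimate is applied to $u_{2,j}$ with a constant depending only on $N_1$, and not on the position of $\cQ_j$ inside $B(0,N_2)$ — this is exactly the Galilean invariance above, and it is what makes the bilinear gain scale with the \emph{smaller} frequency $N_1$. By contrast, a direct repetition of the counting argument of the Strichartz proof — keeping track that $2n-m$ now ranges over a ball of radius $\sim N_1$ — handles the case $l=2k-Q(m)\les N_1^2$ for free (there $|G_l|\les l^{s_0+}\les N_1^{2s_0+}$ already), but for $l\gg N_1^2$ it would require bounding the number of lattice points in an $O(l^{-1/2})$-neighbourhood of an arc of length $\les N_1$ of the ellipse $Q=l$, a genuine (if classical) lattice-point estimate that the orthogonality argument sidesteps entirely.
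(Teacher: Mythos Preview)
Your proof is correct and follows exactly the paper's approach: decompose $u_2$ into frequency pieces supported on cubes of side $\sim N_1$, use almost orthogonality of the products in $L^2_tL^2_x$, apply H\"older, and then the linear $L^4$ Strichartz estimate to each factor. You are more explicit than the paper about why the $L^4$ estimate applies to $u_{2,j}$ with constant $N_1^{(s_0/2)+}$ (via Galilean invariance, or equivalently because only the diameter of the frequency support enters the lattice-point bound); the paper uses this implicitly.
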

    
    \begin{proof}

    Let $P_{I}$ be the partition of $\Z^2$ into boxes $I$ of size $N_1$. We can decompose $u_2$ as $u_2=\sum_I u_2^{(I)}=\sum_I P_{I} u_2$ and by almost orthogonality, and that $e^{it\Delta}u_1 e^{it\Delta}u_2^{(I)}=P_{5I}(e^{it\Delta}u_1 e^{it\Delta}u_2^{(I)})$, which follows from the convolution property, we have,
    
    \begin{eqnarray}
      \|e^{it\Delta}u_1 e^{it\Delta}u_2\|_{L^2_tL^2_x} &\leq& \|\sum_I e^{it\Delta}u_1 e^{it\Delta}u_2^{(I)}\|_{L^2_tL^2_x}\non\\
    &\les& (\sum_I \|e^{it\Delta}u_1 e^{it\Delta}u_2^{(I)}\|_{L^2_tL^2_x}^2)^{1/2}\non\\
    &\les& (\sum_I \|e^{it\Delta}u_1\|_{L^4_tL^4_x}^2 \|e^{it\Delta}u_2^{(I)}\|_{L^4_tL^4_x}^2)^{1/2}\non\\
    &\les& N_1^{\frac{s_0+}{2}}\|u_1\|_{L^2_x}(\sum_I \|u_2^{(I)}\|_{L^2_x}^2)^{1/2}\non\\
    &\les&  N_1^{s_0+}\|u_1\|_{L^2_x}\|u_2\|_{L^2_x}.\non
   \end{eqnarray}
    
    \end{proof}
    
    Using this bilinear Strichartz estimate we can also prove:
   \begin{lemma}\label{bistr-xsb}
    Let any $u_1, u_2\in X^{0,b}$ such that the Fourier transforms of $u_1$ and $u_2$ are supported in $[N_1, 2N_1]$ and $[N_2, 2N_2]$ respectively with $N_1\leq N_2$. Then we have, 
    \begin{equation}\label{inter-1/2}
    \|u_1u_2\|_{L_t^2L_x^2}\les N_1^{s_0+}\|u_1\|_{X^{0,b}}\|u_2\|_{X^{0,b}}.
    \end{equation}
   \end{lemma}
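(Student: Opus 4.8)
The plan is to transfer the fixed-time bilinear Strichartz estimate of Lemma \ref{bistr} to the Bourgain-space setting by freezing the modulation variables and using the $X^{0,b}$ structure with $b>1/2$. First I would write each $u_i$ via its space-time Fourier transform and decompose along the modulation: set $u_i = \int_{\R} u_{i,\lambda_i}\,d\lambda_i$ where $u_{i,\lambda_i}$ has space-time Fourier support in the slab $\{|\tau - Q(m,n) - \lambda_i|\le 1\}$ (equivalently, write $u_i(t,x)=\int e^{it\lambda_i} e^{it\Delta} g_{i,\lambda_i}(x)\,d\lambda_i$ for suitable $g_{i,\lambda_i}\in L^2_x$ with $\|g_{i,\lambda_i}\|_{L^2_x}$ controlled by the $\tau$-slices of $\widehat{u_i}$). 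The key point is that the spatial Fourier supports of $g_{1,\lambda_1}$ and $g_{2,\lambda_2}$ still lie in $[N_1,2N_1]$ and $[N_2,2N_2]$, so Lemma \ref{bistr} applies to each pair $e^{it\Delta}g_{1,\lambda_1}\, e^{it\Delta}g_{2,\lambda_2}$ with the same constant $N_1^{s_0+}$, uniformly in $\lambda_1,\lambda_2$.

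Next I would estimate
\[
\|u_1 u_2\|_{L^2_tL^2_x} \le \int_{\R}\int_{\R} \big\| e^{it\Delta}g_{1,\lambda_1}\, e^{it\Delta}g_{2,\lambda_2}\big\|_{L^2_tL^2_x}\, d\lambda_1\, d\lambda_2
\les N_1^{s_0+}\int_{\R}\int_{\R} \|g_{1,\lambda_1}\|_{L^2_x}\|g_{2,\lambda_2}\|_{L^2_x}\, d\lambda_1\, d\lambda_2,
\]
and then apply Cauchy--Schwarz in each $\lambda_i$ against the weight $\langle\lambda_i\rangle^{2b}$. Since $b>1/2$, the factor $\int \langle\lambda_i\rangle^{-2b}\,d\lambda_i$ converges, and $\big(\int \langle\lambda_i\rangle^{2b}\|g_{i,\lambda_i}\|_{L^2_x}^2\,d\lambda_i\big)^{1/2}$ is comparable to $\|u_i\|_{X^{0,b}}$ by the definition of the Bourgain norm (this is just Plancherel in $t$ together with the fact that the weight $\langle\tau-Q(m,n)\rangle^b$ is, up to the slab decomposition, $\langle\lambda_i\rangle^b$ on each slice). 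This yields the claimed bound $\|u_1u_2\|_{L^2_tL^2_x}\les N_1^{s_0+}\|u_1\|_{X^{0,b}}\|u_2\|_{X^{0,b}}$.

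The main obstacle — really the only subtle point — is making the slab decomposition rigorous and keeping track of the constants: one must check that integrating the continuous family of free solutions over $\lambda_i$ does not lose anything beyond the convergent factor coming from $b>1/2$, and that the triangle inequality through the integral is legitimate (which it is, since $L^2_tL^2_x$ is a Banach space and the integrand is measurable). A minor technical nuisance is that the modulation variable $\lambda_i$ ranges over all of $\R$ rather than a bounded set, but the Schwartz-type decay encoded in the $\langle\lambda_i\rangle^{2b}$ weight with $b>1/2$ handles this cleanly. Everything else is a direct application of Lemma \ref{bistr} slice by slice, so once the decomposition is set up the estimate follows immediately.
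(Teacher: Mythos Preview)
Your proposal is correct and is essentially the same argument as the paper's: both write $u_i(t,x)=\int e^{it\lambda_i}e^{it\Delta}g_{i,\lambda_i}(x)\,d\lambda_i$ (the paper calls $g_{i,\lambda_i}$ by $\widehat{v_i}(\tau_i,\cdot)$), apply Lemma~\ref{bistr} to each pair of free solutions, and then Cauchy--Schwarz in the modulation variable using $b>1/2$ to recover the $X^{0,b}$ norms. The only cosmetic difference is that you phrase the decomposition as a ``slab'' picture while the paper writes it out directly via the inverse Fourier transform shifted by $Q(n)$; the content is identical.
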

   \begin{proof}
   We take $\tilde{u_i}(t,x)$ on $\mathbb{R}\times \mathbb{T}_{\theta}^2$ such that $\tilde{u_i}(t,x)=u_i(t,x)$ for $(t,x)\in \mathbb{T}\times \mathbb{T}_{\theta}^2$. With an abuse of notation we will call $\tilde{u_i}=u_i$. For $n\in \Z^2$, $Q(n)$ being the symbol of Laplacian we have,
   \begin{eqnarray}
    u_i(t,x)&=&\frac{1}{(2\pi)^3}\int\sum_{n\in\Z^2}\widehat{u_i}(\tau,n)e^{it\tau}e^{ix.n} d\tau\non\\
    &=&\frac{1}{(2\pi)^3}\int\sum_{n\in\Z^2} \widehat{u_i}(\tau,n)e^{it\tau}e^{ix.n}e^{itQ(n)}e^{-itQ(n)} d\tau\non\\
    &=&\frac{1}{(2\pi)^3}\int\sum_{n\in\Z^2} \widehat{u_i}(\tau,n)e^{it(\tau+Q(n))}e^{ix.n}e^{-itQ(n)}d\tau\non\\
    &=&\frac{1}{(2\pi)^3}\int\sum_{n\in\Z^2} \widehat{u_i}(\tau-Q(n),n)e^{it\tau}e^{ix.n}e^{-itQ(n)} d\tau\non\\
    &=&\frac{1}{(2\pi)}\int e^{it\Delta}\widehat{v_i}(\tau,x)e^{it\tau} d\tau,\non
   \end{eqnarray}
   by the definition of linear propagator, where $\widehat{v_i}(\tau,x)=\frac{1}{(2\pi)^2}\sum_{n\in\Z^2} \widehat{u_i}(\tau-Q(n),n)e^{ix.n}$. Thus,
   \begin{eqnarray}
   \int \int u_1u_2dxdt&=&\int e^{it(\tau_1+\tau_2)}e^{it\Delta}\widehat{v_1}(\tau_1,x)e^{it\Delta}\widehat{v_2}(\tau_2,x) d\tau_1 d\tau_2dtdx\non\\
   &=&\int e^{it(\tau_1+\tau_2)}e^{it\Delta}\widehat{v_1}(\tau_1,x)e^{it\Delta}\widehat{v_2}(\tau_2,x)dtdx d\tau_1 d\tau_2\non\\
   &\les& \int N_1^{s_0+}\|\widehat{v_1}\|_{L_x^2}\|\widehat{v_2}\|_{L_x^2}d\tau_1 d\tau_2\non\\
   &=& N_1^{s_0+}\int\|\widehat{v_1}\|_{L_x^2}d\tau_1\int\|\widehat{v_2}\|_{L_x^2}d\tau_2,\non
   \end{eqnarray}
   and for each $i$ we use,
   \begin{eqnarray}
   \int\|\widehat{v_i}\|_{L_x^2}d\tau_i&=&\int\frac{\langle\tau_i\rangle^b}{\langle \tau_i\rangle^b}\|\widehat{v_i}\|_{L_x^2}d\tau_i\non\\
   &\les&(\int\langle\tau_i \rangle^{2b}\|\widehat{v_i}\|_{L_x^2}^2d\tau_i)^{1/2}\non\\
   &=&\|u_i\|_{X^{0,b}}.\non
   \end{eqnarray}
   and the result follows by taking the infimum of such $u_i$'s.
   \end{proof}       
   Also, using embedding $X^{0,(1/4)+}\subset L^4_tL^2_x$, which is obtained by interpolation between $X^{0,b}\subset L^\infty_tL^2_x $ for $b>1/2$ and $X^{0,0}=L^2_tL^2_x$, we see that,
   \begin{eqnarray}
      \|u_1u_2\|_{L^2_tL^2_x}&\leq& \|u_1\|_{L^4_tL^{\infty}_x} \|u_2\|_{L^4_tL^2_x}\non\\
    &\les& N_1\|u_1\|_{L^4_tL^2_x}\|u_2\|_{L^4_tL^2_x}\non\\
    &\les&  N_1\|u_1\|_{X^{0,(1/4)+}}\|u_2\|_{X^{0,(1/4)+}}.\label{inter-1/4}
   \end{eqnarray}
   
   Now we can prove a crude interpolation between \eqref{inter-1/2} and \eqref{inter-1/4} and get:

    \begin{lemma}
    Let $u_1, u_2\in X^{0,b}$ such that the Fourier transforms of $u_1$ and $u_2$ are supported in $[N_1, 2N_1]$ and $[N_2, 2N_2]$ respectively with $N_1\leq N_2$. Then for $s>s_0$ there exists $b'< 1/2$ such that $$\|u_1u_2\|_{L_t^2L_x^2}\les N_1^s\|u_1\|_{X^{0,b'}}\|u_2\|_{X^{0,b'}}.$$
    \end{lemma}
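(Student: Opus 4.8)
The plan is to deduce the lemma by complex interpolation between the two bilinear bounds \eqref{inter-1/2} and \eqref{inter-1/4}, keeping the dyadic frequency blocks $[N_1,2N_1]$, $[N_2,2N_2]$ fixed throughout and exploiting that $s>s_0$ leaves a little room at both exponents. Fix a small $\eta>0$ (to be chosen at the end). Since the Cauchy--Schwarz step in the proof of Lemma \ref{bistr-xsb} only requires the modulation exponent to exceed $\tfrac12$, estimate \eqref{inter-1/2} holds with $b_1:=\tfrac12+\eta$ in place of $b$, and $X^{0,(1/4)+}\subset L^4_tL^2_x$ lets us run \eqref{inter-1/4} with $b_2:=\tfrac14+\eta$:
\[
\|u_1u_2\|_{L^2_tL^2_x}\les N_1^{s_0+}\|u_1\|_{X^{0,b_1}}\|u_2\|_{X^{0,b_1}},\qquad \|u_1u_2\|_{L^2_tL^2_x}\les N_1\|u_1\|_{X^{0,b_2}}\|u_2\|_{X^{0,b_2}}.
\]
In other words the bilinear map $(u_1,u_2)\mapsto u_1u_2$, restricted to these fixed frequency blocks, is bounded from $X^{0,b_j}\times X^{0,b_j}$ to $L^2_tL^2_x$ with norm $\les N_1^{s_0+}$ for $j=1$ and $\les N_1$ for $j=2$.

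I would then interpolate. On the Fourier side $X^{0,b}$ is the weighted space $L^2_\tau\ell^2_n$ with weight $\langle\tau-Q(n)\rangle^{b}$, so $\{X^{0,b}\}_b$ is a complex interpolation scale with $[X^{0,b_1},X^{0,b_2}]_{\theta}=X^{0,b'}$, $b'=(1-\theta)b_1+\theta b_2$, for every $\theta\in(0,1)$. Multilinear (Stein) complex interpolation applied to the bilinear map above --- i.e.\ the three-lines lemma for $z\mapsto\langle\widehat{u_1^{(z)}u_2^{(z)}},g\rangle$ with $\widehat{u_i^{(z)}}=\langle\tau-Q(n)\rangle^{-((1-z)b_1+zb_2)}f_i$, $\|g\|_{L^2}=1$, the two edges of the strip being controlled by the bounds above after absorbing the unimodular factor $\langle\tau-Q(n)\rangle^{-i(\mathrm{Im}\,z)(b_2-b_1)}$ into $f_i$ --- yields
\[
\|u_1u_2\|_{L^2_tL^2_x}\les N_1^{\,((1-\theta)s_0+\theta)+}\,\|u_1\|_{X^{0,b'}}\|u_2\|_{X^{0,b'}},\qquad b'=\tfrac12-\tfrac{\theta}{4}+\eta .
\]

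The remaining step --- choosing the parameters --- is the only one requiring care, and it goes through precisely because $s>s_0$. Two conditions must hold: the power of $N_1$ must be $\le s$, i.e.\ (absorbing the ``$+$'') $(1-\theta)s_0+\theta<s$, equivalently $\theta<\tfrac{s-s_0}{1-s_0}$ --- a genuine positive upper bound since $s>s_0$, and vacuous when $s\ge1$; and we need $b'<\tfrac12$, equivalently $\eta<\tfrac{\theta}{4}$. Choosing first $\theta\in\bigl(0,\tfrac{s-s_0}{1-s_0}\bigr)$ and then $\eta\in\bigl(0,\tfrac{\theta}{4}\bigr)$ gives $b'=\tfrac12-\tfrac{\theta}{4}+\eta<\tfrac12$ together with the asserted estimate; as in Lemma \ref{bistr-xsb} one finishes by passing to the infimum over extensions of $u_1,u_2$. (One can also argue more crudely by decomposing dyadically in modulation and summing a geometric series, the slack $s>s_0$ again forcing convergence, but the interpolation route is shorter.)
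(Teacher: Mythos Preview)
Your argument is correct and follows the same overall strategy as the paper: interpolate between the bilinear estimate \eqref{inter-1/2} (which costs $N_1^{s_0+}$ but requires $b>\tfrac12$) and the cruder estimate \eqref{inter-1/4} (which costs $N_1$ but only needs $b>\tfrac14$), using the gap $s>s_0$ to land strictly below $b'=\tfrac12$. The only difference is in how the interpolation is executed. The paper avoids multilinear interpolation by freezing one variable at a time: it first weakens \eqref{inter-1/4} to $N_1\|u_1\|_{X^{0,b}}\|u_2\|_{X^{0,(1/4)+}}$ and interpolates in $u_2$ against \eqref{inter-1/2} to get an intermediate bound with $\|u_2\|_{X^{0,b'}}$, then repeats the trick in $u_1$. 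Your single-step bilinear complex interpolation is more direct and gives the same numerology, at the cost of invoking the multilinear Stein theorem; the paper's iterated linear interpolation is more elementary but needs two passes. Either way the parameter accounting is exactly as you wrote: $\theta<\frac{s-s_0}{1-s_0}$ forces the $N_1$-exponent below $s$, and then $\eta<\theta/4$ forces $b'<\tfrac12$.
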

    \begin{proof}
    We have,
    \begin{equation}\label{est1/4}
    \|u_1u_2\|_{L_t^2L_x^2}\les N_1\|u_1\|_{X^{0,(1/4)+}}\|u_2\|_{X^{0,(1/4)+}},
    \end{equation}
    and we also have,
    \begin{equation}\label{estb}
    \|u_1u_2\|_{L_t^2L_x^2}\les N_1^{s_0+}\|u_1\|_{X^{0,b}}\|u_2\|_{X^{0,b}}.
    \end{equation}
    
     Note that $\eqref{est1/4}\leq N_1\|u_1\|_{X^{0,b}}\|u_2\|_{X^{0,(1/4)+}}$. Then for fixed $u_1$ interpolating between this result and \eqref{estb}, we get,
     \begin{equation}\label{estinter}
     \|u_1u_2\|_{L_t^2L_x^2}\les N_1^{\tilde{s}}\|u_1\|_{X^{0,b}}\|u_2\|_{X^{0,b'}},
     \end{equation}
     for some $\tilde{s}\in [s_0,s)$ and $b'<1/2$. Also note that $\eqref{est1/4}\leq N_1\|u_1\|_{X^{0,(1/4)+}}\|u_2\|_{X^{0,b'}}$. Thus, for fixed $u_2$, interpolating between this result and \eqref{estinter} we obtain, $$\|u_1u_2\|_{L_t^2L_x^2}\les N_1^s\|u_1\|_{X^{0,b'}}\|u_2\|_{X^{0,b'}}.$$
    \end{proof}

   Thus we get that, if $u_i\in X^{0,b}$, $i\in \{1,2,3,4\}$ are functions s.t. their space Fourier transforms are supported in $[N_i, 2N_i]$ respectively with $N_1\leq N_2\leq N_3\leq N_4$ and $s>s_0$, there exists $b'< 1/2$ such that 
  \begin{eqnarray}\label{interpolation}
  \int \int u_1\overline{u_2}u_3\overline{u_4}dxdt&\les&\|u_1u_3\|_{L^2_tL^2_x} \|u_2u_4\|_{L^2_tL^2_x}\non\\
  &\leq& (N_1 N_2)^s \|u_1\|_{X^{0,b'}} \|u_2\|_{X^{0,b'}} \|u_3\|_{X^{0,b'}} \|u_4\|_{X^{0,b'}}.\non
  \end{eqnarray}
  
  We are almost ready to finish the proof of the proposition. All we need now is to guarantee the existence of $b,b'$ which satisfy the conditions of Proposition \ref{duhamel}. But for that we need better estimates on the restrictions of functions on the eigenspaces of the Laplacian. Let $\mbox{\Large$\wp$}_k$ be the projection onto the $e_k$, the eigenspace of Laplacian corresponding to the eigenvalue $\mu_k$. Also for each $e_k$ we see that $\mu_ke_k=-\Delta e_k$ which implies that $$\mu_k\widehat{e_k}(m_1,m_2)=((\theta_1m_1)^2+(\theta_2m_2)^2)\widehat{e_k}(m_1,m_2),$$ hence, $\widehat{e_k}$'s are supported on $\mu_k=(\theta_1m_1)^2+(\theta_2m_2)^2=Q(m_1,m_2)$ i.e. they are supported on $\mu_k^s=|Q(m_1,m_2)|^{2s}$. This gives that $$\mu_k^se_k=(\sqrt{-\Delta})^{2s} e_k.$$
   Since $\|\mbox{\Large$\wp$}_ku\|_{L_x^2}\leq\|u\|_{L_x^2}$ and that $e_k$'s form an orthonormal basis, we can define Sobolev space $H^s$, with the norm, $\|u\|_{H^s}^2=\sum_k \langle \mu_k\rangle^s\|\mbox{\Large$\wp$}_k u\|_{L^2}^2$ which we will be using later in the paper.
   
   Also we see that since $\theta_1/\theta_2$ is irrational, if $\mu_k=(\theta_1m_1)^2+(\theta_2m_2)^2=(\theta_1n_1)^2+(\theta_2n_2)^2$ we have $(m_1,m_2)=(\pm n_1, \pm n_2)$, which means, for any such $\mu_k$, we have four eigenfunctions $(e^{ix.(\pm m_1, \pm m_2)})$ and $\mbox{\Large$\wp$}_k u$ is the the restriction of $u$ to the eigenspace generated by these eigenfunctions.
   
   Now if we consider the integrals of the form $$A=\int e^{ix.(m_1, m_2)}e^{ix.(n_1,n_2)}e^{ix.(j_1,j_2)}e^{ix.(l_1,l_2)}a_ma_na_ja_l dx,$$ we see that $A=0$ if $(m_1+n_1+j_1+l_1,m_2+n_2+j_2+l_2)\neq 0$. Thus if $|m_1|>4max(|n_1|,|j_1|,|l_1|)$ or $|m_2|>4max(|n_2|,|j_2|,|l_2|)$, then $A=0$. This says that, if $\mu_{k_4}^{1/2}>8\mu_{k_i}^{1/2}$ for $i=\{1,2,3\},$ then
   $$\int \mbox{\Large$\wp$}_{k_1}(u_1)\mbox{\Large$\wp$}_{k_2}(u_2)\mbox{\Large$\wp$}_{k_3}(u_3)\mbox{\Large$\wp$}_{k_4}(u_4)dx=0,$$
  and we will use this observation in our estimates.
     
    Now we can show the existence of $1/4<b'<1/2<b$ s.t. for every $s>s_0$, $$\|u_1u_2\overline{u_3}\|_{X^{s,-b'}_T}\les \|u_1\|_{X^{s,b}_T} \|u_2\|_{X^{s,b}_T} \|u_3\|_{X^{s,b}_T},$$ which will finish the proof of local well-posedness.

   As we mentioned before, we will bound, $|\int u_1u_2\overline{u_3}\overline{u_4}dxdt|$. To do so, it is enough to bound this integral for $u_i=\mbox{\large$\chi$}^{\sqrt{-\Delta}}_{N_i}u_i$, where $N_i$ is a dyadic integer. Let without loss of generality that $N_1\leq N_2\leq N_3$ and let $s'\in (s_0,s)$ then for the range $N_4\leq 8N_3$,
   
   \begin{eqnarray}
   |\int u_1u_2\overline{u_3}\overline{u_4}dxdt| &\les& (N_1N_2)^{s'}\|u_1\|_{X^{0,b'}_T}\|u_2\|_{X^{0,b'}_T}\|u_3\|_{X^{0,b'}_T}\|u_4\|_{X^{0,b'}_T}\non\\
   &=& (N_1N_2)^{s'-s}(N_4/N_3)^{s}N_1^s\|u_1\|_{X^{0,b'}_T}N_2^s\|u_2\|_{X^{0,b'}_T}N_3^s\|u_3\|_{X^{0,b'}_T}N_4^{-s}\|u_4\|_{X^{0,b'}_T}\non\\
   &\les&(N_1N_2)^{s'-s}(N_4/N_3)^{s}\|u_1\|_{X^{s,b'}_T}\|u_2\|_{X^{s,b'}_T}\|u_3\|_{X^{s,b'}_T}\|u_4\|_{X^{-s,b'}_T}.\non
   \end{eqnarray}

   Hence, for the range of the frequencies, write $N_4=2^nN_3$ for $n\leq3$ and then we have 
   \begin{eqnarray}|\int u_1u_2\overline{u_3}\overline{u_4}dxdt|&\les& (N_1N_2)^{s'-s}2^{ns}\|u_1\|_{X^{s,b'}_T}\|u_2\|_{X^{s,b'}_T}\|u_3\|_{X^{s,b'}_T}\|u_4\|_{X^{-s,b'}_T}\non\\
   &=&(N_1N_2)^{s'-s}2^{ns}\|u_1\|_{X^{s,b'}_T}\|u_2\|_{X^{s,b'}_T}\|\mbox{\large$\chi$}^{\sqrt{-\Delta}}_{N_42^{-n}}u_3\|_{X^{s,b'}_T}\|\mbox{\large$\chi$}^{\sqrt{-\Delta}}_{N_4}u_4\|_{X^{-s,b'}_T},\non
   \end{eqnarray}
    and summing in $N_1, N_2, N_4$ and $n\leq 3$ we get
  \begin{eqnarray}
  |\int u_1u_2\overline{u_3}\overline{u_4}dxdt|&\les& \sum_{n\leq 3}\sum_{N_1}\sum_{N_2} (N_1N_2)^{s'-s}2^{ns}\|u_1\|_{X^{s,b'}_T}\|u_2\|_{X^{s,b'}_T}\non\\
  &&\qquad\qquad (\sum_{N_4}\|\mbox{\large$\chi$}^{\sqrt{-\Delta}}_{N_42^{-n}}u_3\|_{X^{s,b'}_T}^2)^{1/2}(\sum_{N_4}\|\mbox{\large$\chi$}^{\sqrt{-\Delta}}_{N_4}u_4\|_{X^{-s,b'}_T}^2)^{1/2}\non\\
  &\les& \|u_1\|_{X^{s,b'}_T} \|u_2\|_{X^{s,b'}_T} \|u_3\|_{X^{s,b'}_T}\|u_4\|_{X^{-s,b'}_T},\non
  \end{eqnarray}
   
    where we used the $H^s$-orthogonality of the operators $\mbox{\large$\chi$}^{\sqrt{-\Delta}}_{N}$, for $N$ dyadic integers. And for the range $8N_3\leq N_4$, we use the observation above and get
   $|\int u_1u_2\overline{u_3}\overline{u_4}dxdt|=0$. Thus the Proposition \eqref{trilinear} follows and hence Theorem \ref{local}.
   \end{proof}
   
   \subsubsection{Proof of Theorem \ref{growth}}
   
   The proof of the theorem will mainly follow Bourgain's arguments in \cite{[bou-growth]}, i.e. we will use Lemma \ref{poly-est}. For that, first we need to observe that for $s>1$, in the proof of Proposition \ref{trilinear} if we take $u_1=u_2=u_3=u$ and $s'=1-$, redoing the calculations we get $$\|u|u|^2\|_{X^{s,-b'}_T}\les \|u\|_{X^{s,b}_T} \|u\|_{X^{1,b}_T}^2.$$ This says, we can choose the local well-posedness interval depending only on $\|u(0)\|_{H^1}$. Thus we can find $T_0>0$ such that, for any time $\tau\geq0$ the solution exists for $t\in [\tau,\tau+T_0]$. Now we need to find $r \in (0,1)$ such that for any $t\in [\tau,\tau+T_0]$, 
   \begin{equation}\label{bound}\|u(t)\|_{H_x^s}\leq \|u(\tau)\|_{H_x^s}+C\|u(\tau)\|_{H_x^s}^{1-r}.
   \end{equation}
    Since $L_x^2$-norm of the solution is conserved, it is enough to show this estimate in $\dot{H}_x^s$. Without loss of generality we can take $\tau=0$. Since $$\|u(t)\|_{\dot{H}_x^s}^2- \|u(0)\|_{\dot{H}_x^s}^2=\int_{0}^t\frac{d}{dt'}\|u(t')\|_{\dot{H}_x^s}^2dt',$$ if we show that, for $t\in [0,T_0]$ we have, $$\int_{0}^t\frac{d}{dt'}\|u(t')\|_{\dot{H}_x^s}^2dt'\les \|u\|_{H^s}\|u\|_{H^{s-\sigma}},$$ for some $s-1\geq\sigma>0$, writing $H^{s-\sigma}$ as the interpolation space between $H^1$ and $H^s$ we will obtain, $$\|u(t)\|_{\dot{H}_x^s}^2- \|u(0)\|_{\dot{H}_x^s}^2\les\|u\|_{H^s}\|u\|_{H^s}^{\frac{(s-\sigma-1)}{(s-1)}},$$ which is the estimate we want, where the implicit constant also depends on $\|u\|_{H^{1}}$. For $\sigma>s-1$, $H^{1}$ embeds into $H^{s-\sigma}$ and the result becomes obvious. Now assume $s\in \N$,
   
   \begin{eqnarray}
   \int_{0}^t\frac{d}{dt'}\|u(t')\|_{\dot{H}_x^s}^2dt'&=&\int_{0}^t\frac{d}{dt'}\|D^s u(t')\|_{L_x^2}^2 dt'\non\\
  &=& 2Re \int_{0}^t\int_{T_\theta^2}\frac{d}{dt'}D^s \overline{u}(t')D^s u(t')dxdt',\non
   \end{eqnarray}
   
   and using the expression for $u_t$, we get,
   
   \begin{eqnarray}
   \int_{0}^t\frac{d}{dt'}\|u(t')\|_{\dot{H}_x^s}^2dt'&=& 2Im \int_{0}^t\int_{T_\theta^2}D^s \overline{u}D^s (|u|^2u)dxdt' \non\\
   &=& 4Im \int_{0}^t\int_{T_\theta^2}|D^s u|^2 |u|^2 dxdt'+ 2Im\int_{0}^t\int_{T_\theta^2}(D^s \overline{u})^2u^2 dxdt'\non\\
   &&\qquad\qquad\qquad+2Im\int_{0}^t\int_{T_\theta^2}\sum_{|\alpha|=s \atop \alpha_i\neq s}D^s \overline{u}\partial^{\alpha_1}\overline{u}\partial^{\alpha_2}u\partial^{\alpha_3}u dxdt'\non\\
   &=& 2Im\int_{0}^t\int_{T_\theta^2}(D^s \overline{u})^2 u^2 dxdt'+2Im\int_{0}^t\int_{T_\theta^2}\sum_{|\alpha|=s \atop \alpha_i\neq s}D^s \overline{u}\partial^{\alpha_1}\overline{u}\partial^{\alpha_2}u\partial^{\alpha_3}u dxdt'\non\\
   &=& I+II.\non
   \end{eqnarray}

   Second term is easier to estimate. For any multiindex $|\alpha|=s$ such that $\alpha_i\neq s$ for any $i$, using duality and Proposition \eqref{trilinear}, we have,
   
   \begin{eqnarray}
   II&\les& \|D^s u\|_{X_{T_0}^{-s_0-,b}}\|\partial^{\alpha_1}u\partial^{\alpha_2}\overline{u}\partial^{\alpha_3}\overline{u}\|_{X_{T_0}^{s_0+,-b}}\non\\
   &\les& \|u\|_{X_{T_0}^{s-s_0-,b}}\|\partial^{\alpha_1}u\|_{X_{T_0}^{s_0+,b}}\|\partial^{\alpha_2}u\|_{X_{T_0}^{s_0+,b}}\|\partial^{\alpha_3}u\|_{X_{T_0}^{s_0+,b}}\non\\
   &\les&\|u\|_{X_{T_0}^{s-s_0-,b}}\|u\|_{X_{T_0}^{s_0+{\alpha_1}+,b}}\|u\|_{X_{T_0}^{s_0+{\alpha_2}+,b}}\|u\|_{X_{T_0}^{s_0+{\alpha_3}+,b}}.\non
   \end{eqnarray}

   For $1\leq \alpha_i \leq s-1$, using interpolation and the fact that $\|u\|_{X_{T_0}^{1,b}}$ is bounded, which follows from the local theory, we get,
   \begin{equation}
   II\les \|u\|_{X_{T_0}^{s,b}}^{\frac{s-s_0-1-}{s-1}}\|u\|_{X_{T_0}^{s,b}}^{\frac{s_0+\alpha_1-1+}{s-1}}\|u\|_{X_{T_0}^{s,b}}^{\frac{s_0+\alpha_2-1+}{s-1}}\|u\|_{X_{T_0}^{s,b}}^{\frac{s_0+\alpha_3-1+}{s-1}}.\non
   \end{equation}
    If for some $i\in\{1,2,3\}$, $\alpha_i= 0$, say $\alpha_3=0$, using $\|u\|_{X_{T_0}^{s_0+,b}}\leq\|u\|_{X_{T_0}^{1,b}}$ we get,
    \begin{equation}
    II\les \|u\|_{X_{T_0}^{s,b}}^{\frac{s-s_0-1-}{s-1}}\|u\|_{X_{T_0}^{s,b}}^{\frac{s_0+\alpha_1-1+}{s-1}}\|u\|_{X_{T_0}^{s,b}}^{\frac{s_0+\alpha_2-1+}{s-1}}.
    \end{equation}

    Thus we get the desired bound using $\|u\|_{X_{T_0}^{s,b}}\les\|u(0)\|_{H^s}$ in the local well-posedness interval.
    
   The term $I$ is harder to deal with since the highest order derivatives acts on $\overline{u}$. The main problem here is that, because of the term $(D^s\overline{u})^2$ in the integrand we expect to have a bound of the form $II\les \|u\|_{X_{T_0}^{s,b}}^2$ which is not useful. To remedy that problem, we will try to get 
   \begin{equation}\label{lastest}
    II\les \|u\|_{X_{T_0}^{s,b}}\|u\|_{X_{T_0}^{s-\sigma,b}}\|u\|_{X_{T_0}^{1,b}}\|u\|_{X_{T_0}^{1,b}},
   \end{equation}
    for some $\sigma >0$ to be determined. In the following estimates we will mainly follow Zhong's arguments in \cite{[zho]}.
   
   Let $D^s\overline{u}=u_1=u_2$, $u_3=u_4=u$, and $u_i= \sum_j \mbox{\large$\chi$}^{\sqrt{-\Delta}}_{N_{(i,j)}}u_i=\sum_j u_i^j$ where $N_{(i,j)}$'s are dyadic integers. Then $$|II|\leq \sum_N |II(N)|=\sum_N |\int_{\mathbb T_\theta^2}\int_{[0,T_0]}u_1^j u_2^k u_3^m u_4^n|.$$
   
   Since we need to get an estimate of the form \eqref{lastest}, we should gain some derivative in the estimate of $II(N)$. For the terms $N_{(1,j)}>8( N_{(2,k)}+N_{(3,m)}+N_{(4,n)})$, we again see that $II(N)=0$. Hence we have to focus on the terms where $N_{(1,j)} <8( N_{(2,k)}+N_{(3,m)}+N_{(4,n)})$.
   
    Assume $N_{(1,j)}<8( N_{(2,k)}+N_{(3,m)}+N_{(4,n)})$, and thus, $N_{(1,j)} \les max (N_{(2,k)},N_{(3,m)},N_{(4,n)})$. Since $u_2$ has full $s$-derivative, we will estimate $II$ using the interaction between frequency projections of $u_2$ with $u_3$ and $u_4$. We consider two cases; $N_{(2,k)} < 4 N_{(3,m)}$ or $N_{(2,k)} < 4 N_{(4,n)}$ as case one and $N_{(2,k)} \geq 4 N_{(3,m)}$ and $N_{(2,k)} \geq 4 N_{(4,n)}$ as case two. 
    
    \textbf{Case 1:} $N_{(2,k)} < 4 N_{(3,m)}$ or $N_{(2,k)} < 4 N_{(4,n)}$. This case gives a control over the $N_{(2,k)}$ term and is easier to handle. Without loss of generality we can assume, $N_{(2,k)} < 4 N_{(3,m)}$ and $N_{(4,n)}\ges 1$. Hence by Lemma \ref{bistr},
   \begin{eqnarray}
   II(N)&\leq& \|u_1^ju_3^m\|_{L^2_tL^2_x([0,T_0])}\|u_2^k u_4^n\|_{L^2_tL^2_x([0,T_0])}\non\\
   &\les& min(N_{(1,j)},N_{(3,m)})^{s_0+} min(N_{(2,k)},N_{(4,n)})^{s_0+}\|u_1^j\|_{X^{0,b}_{T_0}}\|u_2^k\|_{X^{0,b}_{T_0}}\|u_3^m\|_{X^{0,b}_{T_0}}\|u_4^n\|_{X^{0,b}_{T_0}}\non\\
   &\les& (N_{(3,m)}N_{(4,n)})^{s_0+}\|u_1^j\|_{X^{0,b}_{T_0}}\|u_2^k\|_{X^{0,b}_{T_0}}\|u_3^m\|_{X^{0,b}_{T_0}}\|u_4^n\|_{X^{0,b}_{T_0}}\non\\
   &\les& (N_{(3,m)}N_{(4,n)})^{s_0-1+}\|u_1^j\|_{X^{0,b}_{T_0}}\|u_2^k\|_{X^{0,b}_{T_0}}\|u_3^m\|_{X^{1,b}_{T_0}}\|u_4^n\|_{X^{1,b}_{T_0}}\non\\
   &\les&(N_{(1,j)}N_{(2,k)}N_{(3,m)}N_{(4,n)})^-N_{(1,j)}^+N_{(2,k)}^+(N_{(3,m)}N_{(4,n)})^{(s_0-1)+}\non\\
   &&\qquad\qquad\qquad\times\|u_1^j\|_{X^{0,b}_{T_0}}\|u_2^k\|_{X^{0,b}_{T_0}}\|u_3^m\|_{X^{1,b}_{T_0}}\|u_4^n\|_{X^{1,b}_{T_0}},\non\\
    &\les&(N_{(1,j)}N_{(2,k)}N_{(3,m)}N_{(4,n)})^-N_{(2,k)}^+(N_{(3,m)}N_{(4,n)})^{(s_0-1)+}\non\\
    &&\qquad\qquad\qquad\times\|u_1^j\|_{X^{0,b}_{T_0}}\|u_2^k\|_{X^{0,b}_{T_0}}\|u_3^m\|_{X^{1,b}_{T_0}}\|u_4^n\|_{X^{1,b}_{T_0}},\non
   \end{eqnarray}
   and given $N_{(2,k)} < 4 N_{(3,m)}$ we see, 
   \begin{eqnarray}
   II(N)&\les& (N_{(1,j)}N_{(2,k)}N_{(3,m)}N_{(4,n)})^-N_{2,k}^{((s_0-1))+}\|u_1^j\|_{X^{0,b}_{T_0}}\|u_2^k\|_{X^{0,b}_{T_0}}\|u_3^m\|_{X^{1,b}_{T_0}}\|u_4^n\|_{X^{1,b}_{T_0}}\non\\
   &\les& (N_{(1,j)}N_{(2,k)}N_{(3,m)}N_{(4,n)})^- \|u_1^j\|_{X^{0,b}_{T_0}}\|u_2^k\|_{X^{((s_0-1))+,b}_{T_0}}\|u_3^m\|_{X^{1,b}_{T_0}}\|u_4^n\|_{X^{1,b}_{T_0}},\non
   \end{eqnarray}
   which gives the desired result for $\sigma=(1-s_0)+$.
   
   \textbf{Case 2:} $N_{(2,k)} \geq 4 N_{(3,m)}$ and $N_{(2,k)} \geq 4 N_{(4,n)}$. Recall that since $N_{(3,m)} \leq 1/4 N_{(2,k)}$ and $N_{(4,m)} \leq 1/4 N_{(2,k)}$, we have, $N_{(1,j)}\leq 12 N_{(2,k)}$, in which case we define,
   \begin{equation}\label{decomp} u_i^{j,j'}= \sum_{(N_{(i,j)}\leq \langle\mu_k\rangle^{1/2}\leq 2N_{(i,j)})} \quad\int\limits_{(L_{(i,j')}\leq \langle\mu_k+\tau\rangle\leq 2L_{(i,j')})}e^{it\tau}\widehat{\mbox{\Large$\wp$}_k u_i}(\tau)d\tau, 
   \end{equation}
   for $L_{(i,j')}$ dyadic integers, where $\mu_k$'s being the eigenvalues of Laplacian and $\mbox{\Large$\wp$}_k u$ being the projection of $u$ on the eigenspace corresponding to $\mu_k$
   Then we have, 
   \begin{eqnarray}
   II(N)&\leq& \sum_L |\int u_1^{j,j'}u_2^{k,k'}u_3^{m,m'}u_4^{n,n'}dxdt|\non\\
   &\leq& \sum_L |\int\limits_{\{\sum_{i=1}^4 \tau_i=0\}}\widehat{u_1}^{j,j'}\widehat{u_2}^{k,k'}\widehat{u_3}^{m,m'}\widehat{u_4}^{n,n'}dxd\tau|\non\\
   &=& \sum_L II(N,L),\non
   \end{eqnarray}
   where the Fourier transform is with respect to time only and $L=(L_{(1,j')},L_{(2,k')},L_{(3,m')},L_{(4,n')})$. Thus we need to estimate $II(N,L)$ for each $L$. Since we are concerned with the derivative gain for $u_2$ in the estimate of $II(N)$, for each $L$ we need to use the relation between $L$-terms and $N_{(2,k)}$. For that we consider two cases again; $|\tau_3| \leq 1/3 N_{(2,k)}^2$ and $|\tau_4| \leq 1/3 N_{(2,k)}^2$ as case one, and $|\tau_3| > 1/3 N_{(2,k)}^2$ or $|\tau_4| > 1/3 N_{(2,k)}^2$ as case two. 
   
   \textbf{Case 1:} $|\tau_3| \leq 1/3 N_{(2,k)}^2$ and $|\tau_4| \leq 1/3 N_{(2,k)}^2$. For this case, using the decomposition \eqref{decomp} we get,
   \begin{eqnarray}
   |\mu_{k_1}+\tau_1|+|\mu_{k_2}+\tau_2|&\geq& |\mu_{k_1}+\tau_1+\mu_{k_2}+\tau_2|\non\\
   &\geq& |\mu_{k_1}+\mu_{k_2}|-|\tau_1+\tau_2|\non\\
   &\geq& \mu_{k_2}-|\tau_1+\tau_2|\non\\
   &=& \mu_{k_2}-|\tau_3+\tau_4|\non\\
   &\ges& N_{(2,k)}^2,\non
   \end{eqnarray}
   
   and thus, $L_{(1,j')}+L_{(2,k')}\ges N_{(2,k)}^2$, which also gives $max\{L_{(1,j')},L_{(2,k')}\}\ges N_{(2,k)}^2$. So we get,
   \begin{equation}
   II(N,L) \leq \|u_1^{j,j'}\|_{L_t^4L_x^2}\|u_2^{k,k'}\|_{L_t^4L_x^2}\|u_3^{m,m'}\|_{L_t^4L_x^\infty}\|u_4^{n,n'}\|_{L_t^4L_x^\infty},\non
   \end{equation}
   then by Sobolev embedding we get,
   \begin{equation}
   II(N,L)\les (N_{(3,m)}N_{(4,n)})\|u_1^{j,j'}\|_{L_t^4L_x^2}\|u_2^{k,k'}\|_{L_t^4L_x^2}\|u_3^{m,m'}\|_{L_t^4L_x^2}\|u_4^{n,n'}\|_{L_t^4L_x^2},\non
   \end{equation}
   and using $X^{0,1/4+}\subset L_t^4L_x^2$, we obtain,
   \begin{eqnarray}
   II(N,L)&\les& (N_{(3,m)}N_{(4,n)})\|u_1^{j,j'}\|_{X^{0,1/4+}}\|u_2^{k,k'}\|_{X^{0,1/4+}}\|u_3^{m,m'}\|_{X^{0,1/4+}}\|u_4^{n,n'}\|_{X^{0,1/4+}}\non\\
   &\les& \|u_1^{j,j'}\|_{X^{0,1/4+}}\|u_2^{k,k'}\|_{X^{0,1/4+}}\|u_3^{m,m'}\|_{X^{1,1/4+}}\|u_4^{n,n'}\|_{X^{1,1/4+}}\non\\
   &\les& \frac{1}{(L_{(1,j')}L_{(2,k')}L_{(3,m')}L_{(4,n')})^{b-1/4-}}\|u_1^{j,j'}\|_{X^{0,b}}\|u_2^{k,k'}\|_{X^{0,b}}\|u_3^{m,m'}\|_{X^{1,b}}\|u_4^{n,n'}\|_{X^{1,b}}\non\\
   &\les& \frac{N_{2,k}^{2(1/4-b+)}}{L_{(1,j')}^+L_{(2,k')}^+(L_{(3,m')}L_{(4,n')})^{b-1/4-}}\|u_1^{j,j'}\|_{X^{0,b}}\|u_2^{k,k'}\|_{X^{0,b}}\|u_3^{m,m'}\|_{X^{1,b}}\|u_4^{n,n'}\|_{X^{1,b}}\non\\
   &\les& \frac{1}{L_{(1,j')}^+L_{(2,k')}^+(L_{(3,m')}L_{(4,n')})^{b-1/4-}}\|u_1^{j,j'}\|_{X^{0,b}}\|u_2^{k,k'}\|_{X^{2(1/4-b+),b}}\|u_3^{m,m'}\|_{X^{1,b}}\|u_4^{n,n'}\|_{X^{1,b}}\non\\
   &\les& \frac{1}{L_{(1,j')}^+L_{(2,k')}^+(L_{(3,m')}L_{(4,n')})^{b-1/4-}}\|u_1\|_{X^{0,b}}\|u_2\|_{X^{2(1/4-b+),b}}\|u_3\|_{X^{1,b}}\|u_4\|_{X^{1,b}}\non\\
   &\les& \frac{(N_{(1,j)}N_{(2,k)}N_{(3,m)}N_{(4,n)})^-}{L_{(1,j')}^+L_{(2,k')}^+(L_{(3,m')}L_{(4,n')})^{b-1/4-}}(N_{1,j}N_{3,m}N_{4,n})^+\|u_1\|_{X^{0,b}}\|u_2\|_{X^{2(1/4-b+),b}}\|u_3\|_{X^{1,b}}\|u_4\|_{X^{1,b}}\non\\
   &\les&\frac{(N_{(1,j)}N_{(2,k)}N_{(3,m)}N_{(4,n)})^-}{L_{(1,j')}^+L_{(2,k')}^+(L_{(3,m')}L_{(4,n')})^{b-1/4-}}(N_{2,k})^+\|u_1\|_{X^{0,b}}\|u_2\|_{X^{2(1/4-b+),b}}\|u_3\|_{X^{1,b}}\|u_4\|_{X^{1,b}}\non\\
   &\les&\frac{(N_{(1,j)}N_{(2,k)}N_{(3,m)}N_{(4,n)})^-}{L_{(1,j')}^+L_{(2,k')}^+(L_{(3,m')}L_{(4,n')})^{b-1/4-}}\|u_1\|_{X^{0,b}}\|u_2\|_{X^{2(1/4-b+),b}}\|u_3\|_{X^{1,b}}\|u_4\|_{X^{1,b}}\non\\
   &\les&\frac{(N_{(1,j)}N_{(2,k)}N_{(3,m)}N_{(4,n)})^-}{L_{(1,j')}^+L_{(2,k')}^+(L_{(3,m')}L_{(4,n')})^{b-1/4-}}\|u_1\|_{X^{0,b}}\|u_2\|_{X^{2(1/4-b+),b}}\|u_3\|_{X^{1,b}}\|u_4\|_{X^{1,b}}.\non
   \end{eqnarray}
 Since the summand is summable in $L$'s and $N$'s, we get the result for $\sigma=2(b-1/4)-$. Now we are left with the last case,
 
 \textbf{Case 2:} $|\tau_3| > 1/3 N_{(2,k)}^2$ or $|\tau_4| > 1/3 N_{(2,k)}^2$. Again, without loss of generality, we will only focus on $|\tau_3| >1/3 N_{(2,k)}^2$. In this case we have $$|\tau_3+\mu_{k_3}|\geq ||\tau_3|-|\mu_{k_3}||\geq 1/3 N_{(2,k)}^2- 4N_{(3,m)}^2 \geq 1/3N_{(2,k)}^2- 1/4N_{(2,k)}^2=1/12N_{(2,k)}^2,$$ which says $L_{(3,m)}\ges N_{(2,k)}^2$ and redoing the previous calculations, we obtain
   \begin{eqnarray}
   II(N,L)&\les& \frac{1}{(L_{(1,j')}L_{(2,k')}L_{(3,m')}L_{(4,n')})^{b-1/4-}}\|u_1^{j,j'}\|_{X^{0,b}}\|u_2^{k,k'}\|_{X^{0,b}}\|u_3^{m,m'}\|_{X^{1,b}}\|u_4^{n,n'}\|_{X^{1,b}}\non\\
   &\les& \frac{N_{(2,k)}^{2(1/4-b)+}}{(L_{(1,j')}L_{(2,k')}L_{(4,n')})^{b-1/4-}}L_{(3,m')}^+\|u_1^{j,j'}\|_{X^{0,b}}\|u_2^{k,k'}\|_{X^{0,b}}\|u_3^{m,m'}\|_{X^{1,b}}\|u_4^{n,n'}\|_{X^{1,b}}\non\\
   &\les& \frac{(N_{(1,j)}N_{(2,k)}N_{(3,m)}N_{(4,n)})^-}{(L_{(1,j')}L_{(2,k')}L_{(4,n')})^{b-1/4-}}L_{(3,m')}^+ N_{(2,k)}^{2(1/4-b+)}\|u_1^{j,j'}\|_{X^{0,b}}\|u_2^{k,k'}\|_{X^{0,b}}\|u_3^{m,m'}\|_{X^{1,b}}\|u_4^{n,n'}\|_{X^{1,b}}\non\\
   &\les& \frac{(N_{(1,j)}N_{(2,k)}N_{(3,m)}N_{(4,n})^-}{(L_{(1,j')}L_{(2,k')L_{(4,n')})^{b-1/4-}}} \|u_1^{j,j'}\|_{X^{0,b}}\|u_2^{k,k'}\|_{X^{2(1/4-b+),b}}\|u_3^{m,m'}\|_{X^{1,b}}\|u_4^{n,n'}\|_{X^{1,b}}\non\\
   &\les& \frac{(N_{(1,j)}N_{(2,k)}N_{(3,m)}N_{(4,n)})^-}{(L_{(1,j')}L_{(2,k')}L_{(4,n')})^{b-1/4-}}\|u_1\|_{X^{0,b}}\|u_2\|_{X^{2(1/4-b+),b}}\|u_3\|_{X^{1,b}}\|u_4\|_{X^{1,b}},\non
   \end{eqnarray}
   again the result follows for $\sigma=2(b-1/4)-$. Thus we have finished estimating $II$ and therefore Theorem \ref{growth}.
   \subsection*{Acknowledgement:}I would like to thank Nikolaos Tzirakis and Burak Erdo\~{g}an for their valuable comments. I would also like to thank to Gigliola Staffilani for bringing to my attention the work of Huxley.


\begin{thebibliography}{100}
\bibitem{[bou-gafa]} J. ~Bourgain, {\em Fourier transform restriction phenomena for certain lattice subsets and applications to nonlinear evolution equations,} Geometric and Functional Analysis, Vol. {\bf 3}, {\bf 3} (1993), 209--262.
\bibitem{[bou-growth]} J.~Bourgain, {\em On the growth in time of higher Sobolev norms of smooth solutions of Hamiltonian PDE,} International Mathematics Research Notices, {\bf 6}, (1996), 277--304.
\bibitem{[bou]} J.~Bourgain, {\em On Strichartz's inequalities and the nonlinear Schr{\"o}dinger equation on irrational tori,} Mathematical aspects of nonlinear dispersive equations. Ann. of Math. Stud., {\bf 163}, (2007), 1--20.
\bibitem{[bgt-comp]} N. ~Burq, and P. ~G{\'e}rard, and N. ~Tzvetkov, {\em Strichartz inequalities and the nonlinear Schr{\"o}dinger equation on compact manifolds,} American Journal of Mathematics, Vol. {\bf 126}, {\bf 3} (2004), 569--605.
\bibitem{[bgt]} N. ~Burq, and P. ~G{\'e}rard, and N. ~Tzvetkov, {\em Bilinear eigenfunction estimates and the nonlinear Schr{\"o}dinger equation on surfaces,} Inventiones mathematicae, Vol. {\bf 159}, {\bf 1} (2005), 187--223.
\bibitem{[cat-wang]} F. ~Catoire, and W-M. ~Wang, {\em Bounds on Sobolev norms for the nonlinear Schr{\"o}dinger equation on general tori,} Communications in Pure and Applied Analysis, Vol. {\bf 9}, {\bf 2} (2010), 483--491.
\bibitem{[nikos]} D. ~De Silva, and N. ~Pavlovic and G. ~Staffilani, and N. ~Tzirakis, {\em Global well-posedness for a periodic nonlinear Schr{\"o}dinger equation in 1-D and 2-D,} Discrete and Continuous Dynamical Systems, Vol. {\bf 19}, {\bf 1} (2007), 37--65.
\bibitem{[gin]} J. ~Ginibre, {\em Le probl{\`e}me de Cauchy pour des EDP semi-lin{\'e}aires p{\'e}riodiques en variables d'espace,} Inventiones mathematicae, Vol. {\bf 37}, (1995), 163--187.
\bibitem{[guo]} Z. ~Guo, and T. ~Oh, and Y. ~Wang, {\em Strichartz estimates for Sch\"odinger equations on irrational tori,} arXiv preprint arXiv:1306.4973, (2013).
\bibitem{[hux]} M. N. ~Huxley, {\em Exponential sums and lattice points III,} Proceedings of the London Mathematical Society, Vol. {\bf 87}, {\bf 3} (2003), 591--609.
\bibitem{[jar]} V. ~Jarn{\'\i}k, {\em {\"U}ber die Gitterpunkte auf konvexen Kurven,} Mathematische Zeitschrift, Vol. {\bf 24}, {\bf 1} (1926), 500--518.
\bibitem{[staf]} G. ~Staffilani, {\em Quadratic forms for a $2 $-D semilinear Schr{\"o}dinger equation,} Duke Mathematical Journal, Vol. {\bf 86}, {\bf 1} (1997), 79--107.
\bibitem{[tao]} T. ~Tao, {\em Nonlinear dispersive equations: local and global analysis,} Amer Mathematical Society, Vol. {\bf 106}, (2006).
\bibitem{[zho]} S. ~Zhong, {\em The growth in time of higher Sobolev norms of solutions to Schr{\"o}dinger equations on compact Riemannian manifolds,} Journal of Differential Equations, Vol. {\bf 245}, {\bf 2} (2008), 359--376.
\end{thebibliography}
 \end{document}